\newtheorem{theorem}{Theorem}[section]
\newtheorem{lemma}{Lemma}[section]
\newtheorem{proposition}{Proposition}[section]
\newtheorem{definition}{Definition}[section]
\begin{document}

\title[Liquid Crystal Equations with Infinite Energy]{Liquid Crystal Equations with Infinite Energy: Local Well-posedness and Blow Up Criterion}
\author{Jinkai Li}
\address[Jinkai Li]{Department of Computer Science and Applied Mathematics, Weizmann Institute of Science, Rehovot, Israel}
\email{jklimath@gmail.com}

\keywords{liquid crystal equations; local well-posedness; blow up criterion.}

\allowdisplaybreaks
\begin{abstract}
In this paper, we consider the Cauchy problem of the incompressible liquid crystal equations in $n$ dimensions. We prove the local well-posedness of mild solutions to the liquid crystal equations with $L^\infty$ initial data, in particular, the initial energy may be infinite. We prove that the solutions are smooth with respect to the space variables away from the initial time. Based on this regularity estimate, we employ the blow up argument and Liouville type theorems to establish vorticity direction type blow up criterions for the type I mild solutions established in the present paper.
\end{abstract}

\maketitle

\section{Introduction}\label{sec1}

In this paper, we consider the following incompressible liquid crystal equations
\begin{align}
&u_t+(u\cdot\nabla)u-\Delta u+\nabla
p=-\textmd{div}(\nabla d\odot\nabla d),\label{1.1}\\
&\textmd{div}u=0,\label{1.2}\\
&d_t+(u\cdot\nabla )d=\Delta d+|\nabla d|^2d \label{1.3}
\end{align}
in $\mathbb R^n\times(0,T)$, $n\geq 2$. Here
$u$ represents the
velocity field of the flow, $d$ represents the
macroscopic molecular orientation of the liquid crystal material,
$p$ denotes the pressure. The symbol $\nabla d\odot\nabla d$, which exhibits the property of
the anisotropy of the material, denotes the $n\times n$ matrix whose
$(i,j)$-th entry is given by $\partial_i d\cdot\partial_jd$ for
$1\leq i,j\leq n$.

System (\ref{1.1})--(\ref{1.3}) is a simplified version of the
Ericksen-Leslie model for the hydrodynamics of nematic liquid crystals
developed by Ericksen \cite{E1}, \cite{E2} and Leslie \cite{LE} in
the 1960's. A brief account of the
Ericksen-Leslie theory and the derivations of several approximate
systems can be found in the appendix of \cite{LL1}, see the two books of Gennes-Prost
\cite{GP} and Chandrasekhar \cite{CH} for more details
of physics. The mathematical analysis of liquid crystal equations is initiated by Lin-Lin in \cite{LL1,LL2} in
the 1990's. They proved in \cite{LL1} the global existence of weak and classical solutions in dimension two or three (for classical solutions in three dimensions, the viscosity coefficient is required to be large) to the Leslie
system of variable length, that is replacing $|\nabla d|^2d$ by the
Ginzburg-Landau type approximation term
$\frac{1-|d|^2}{\varepsilon^2}d$ to relax the nonlinear constraint
$|d|=1$. They proved in \cite{LL2} the
partial regularity theorem for suitable weak solutions, similar to
the classical theorem by Caffarelli-Kohn-Nirenberg \cite{CKN} for
the Navier-Stokes equation. As pointed out in
\cite{LL1,LL2}, both the estimates and arguments in these two papers
depend on $\varepsilon$, and it's a challenging problem to study the
convergence as $\varepsilon$ tends to zero. The two dimensional case
is comparatively easier than the three dimensional one, in fact, Hong \cite{Hong} obtains the convergence as $\varepsilon$ goes to zero up to the first singular time, and thus obtains the existence of weak solutions to the system (\ref{1.1})--(\ref{1.3}). One can also
establish the existence of global weak solutions directly to the
system (\ref{1.1})--(\ref{1.3}) but take the limit for the Ginzburg-Landau
approximate system. Recently, Lin-Lin-Wang \cite{LLW}
proved the global existence of weak solution to the system
(\ref{1.1})--(\ref{1.3}) in dimension two, and obtained the
regularity and asymptotic behavior of the weak solutions they
established. The uniqueness of such weak solution was later proven
in \cite{LW}.

Note that all the papers mentioned in the above consider the solutions with finite energy, i.e. the integration of some quantities of the solutions is finite. A question is what can we say above the weak solution with infinite energy? For the Navier-Stokes equations, the local existence and uniqueness of solutions with bounded initial data and thus allowed to have infinite energy was established by Giga etc in \cite{Giga1}. Such solutions are proven to be global in time in two dimensions later by Giga, Matsui and Sawada in \cite{Giga2}. The Navier-Stokes equations with non decaying initial data are also studied by many authors, see e.g. \cite{Giga3,Sawada,Solonikov} and the references therein. In a recent paper, Giga \cite{Giga4} gives a geometric blow-up criterion on the direction of the vorticity for the three dimensional Navier-Stokes flow whose initial data is just bounded and may have infinite energy. More precisely, by using blow up argument and employing the Liouville type results for two dimensional Navier-Stokes equations, he prove that the bounded type I mild solution (see Definition \ref{def1.2} in the below for the definition of type I) does not blowup if the vorticity direction is uniformly continuous at the place where the vorticity magnitude is large. Such result improves the regularity condition for
the vorticity direction first introduced by Constantin and Fefferman \cite{Constantin} for finite energy weak solution with the payment that the singularity is required to be of type I. For more results on blow up criterion of the vorticity direction, see \cite{Veiga1,Veiga2,Chae1,Chae2,Gruji1,Gruji2,Gruji3,Zhou}.

The goal of this paper is to extend Giga's results \cite{Giga1} on the well-posedness of the Navier-Stokes equations to the liquid crystal equations and establish some blow up criterion on the vorticity direction similar to \cite{Giga4}. More precisely, we establish the local well-posedness of mild solutions to the Cauchy problem of the system (\ref{1.1})--(\ref{1.3}) with bounded initial data which may have infinite energy, and given the regularity criterion on the vorticity direction for type I mild solution. Since the liquid crystals equations is a couple system of the Navier-Stokes equations and the harmonic heat flow equations, besides the vorticity direction assumption, some assumption on the direction field $d$ is required for establishing the blow up criterion on the local solution. In spirit of \cite{Giga4}, we impose some continuity assumption on the direction field. We also prove the regularity of the solutions established in the present paper, in particular, we prove that the velocity and the direction field are smooth with respect to the space variable away from the initial time.

To prove the local well-posedness of mild solutions, we use the Banach fixed point argument, and the key tool is the estimate $\|\nabla e^{t\Delta}\mathbb{P}f\|_\infty\leq Ct^{-1/2}\|f\|_\infty$ proven in \cite{Giga1}. The regularity of the mild solutions is more complicated than the local well-posedness. Since we do not known in advance whether the solutions obtained in the Banach fixed procedure gain higher regularities or not if the initial data is not regular, we need use the approximate argument to do the regularity of the solutions. Our approach can be stated as follows: we first approximate the initial data by a sequence of smooth initial data and thus obtain a sequence of smooth approximate solutions, next, we do the a priori estimates on the higher derivatives of such regularity solutions in terms of the lower ones, next, we prove that the approximate solutions has a common existence time depending only the $L^\infty$ norm of the initial data, and finally, we can take the limit to prove the regularity. On the blow up criterion, thanks to the regularity estimates stated in the above, we use the blow up argument near the singular points to obtain a sequence of bounded mild solutions and take the limit to obtain a limiting vector field $\bar u$ and $\bar d$; by the assumption imposed on the direction field $d$ and the Liouville theorem on harmonic functions, we can prove that $\bar d$ is a constant vector, which implies that $\bar u$ is a bounded backward mild solution to the Navier-Stokes equations, and the rest proof is exactly same to that in Giga \cite{Giga4}.

We complement the equations (\ref{1.1})--(\ref{1.3}) with the following initial data
\begin{equation}
(u, d)|_{t=0}=(u_0, d_0),\label{1.4}
\end{equation}
where $u_0$ and $d_0$ are given vector fields.

Before sating the local well-posedness results, we give the definition of mild solution to the system (\ref{1.1})--(\ref{1.4}).

\begin{definition}\label{def1.1}
We say $(u, d)$ is a mild solution to (\ref{1.1})--(\ref{1.4}) in $\mathbb R^n\times [0, T)$ if it satisfies
\begin{eqnarray*}
  u(t) &=& e^{t\Delta}u_0-\int_0^te^{(t-s)\Delta}\mathbb P \textmd{div}(u\otimes u+\nabla d\odot\nabla d)ds\\
  d(t) &=& e^{t\Delta}d_0+\int_0^te^{(t-s)\Delta}(|\nabla d|^2d-(u\cdot\nabla )d)ds
\end{eqnarray*}
for any $0\leq t<T$, where $\mathbb P$ is the Helmotz projection.
\end{definition}

Our local well-posedness of mild solutions is stated in the following theorem.

\allowdisplaybreaks\begin{theorem}\label{thm1.1}
Let $u_0\in L^\infty$, $d_0\in W^{1,\infty}$ with $\textmd{div}u_0=0$ and $|d_0|=1$. Then there is a unique mild solution $(u,d)$ to system (\ref{1.1})--(\ref{1.4}) in $\mathbb R^n\times[0, T]$ such that $|d|=1$ and

(i) the solution $(u, d)$ has the regularity
$$
u\in C^{1/2}([\delta, T]; W^{k,\infty}) \qquad\mbox{and}\qquad d\in Lip([\delta, T]; W^{k+1,\infty})
$$
for any nonnegative integer $k$ and for any $0<\delta<T$,

(ii) functions $\|u(t)\|_\infty$ and $\|\nabla d(t)\|_\infty$ are both continuous on $[0, T]$, and
$$
\lim_{t\rightarrow 0^+}\|u(t)\|_\infty=\|u_0\|_\infty\qquad \mbox{and}\qquad \lim_{t\rightarrow0^+}\|\nabla d(t)\|_\infty=\|\nabla d_0\|_\infty,
$$

(iii) the existence time $T$ satisfies
$$
T\geq C(\|u_0\|_\infty+\|\nabla d_0\|_\infty)^{-2}
$$
for some positive constant $C$ depending only on $n$.
\end{theorem}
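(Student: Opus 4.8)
The plan is to construct the solution by a Banach fixed point argument in a space adapted to bounded data, then upgrade its regularity by an approximation scheme, and finally recover the constraint $|d|=1$ and the boundary behaviour of the norms.

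First I would recast the integral equations of Definition \ref{def1.1} as a fixed point problem $(u,d)=\Phi(u,d)$ on $[0,T]$, in a suitable space of functions that are bounded on $[0,T]$ and continuous into $L^\infty$ for $t>0$. The contraction is measured in the norm
$$
\|(u,d)\|_{Y_T}=\sup_{0\le t\le T}\big(\|u(t)\|_\infty+\|\nabla d(t)\|_\infty\big),
$$
and I work in the set where $\|(u,d)\|_{Y_T}\le R$, with $R$ comparable to $2M$, $M:=\|u_0\|_\infty+\|\nabla d_0\|_\infty$, while keeping the auxiliary bound $\|d(t)\|_\infty\le 2$ (recall $\|d_0\|_\infty=1$). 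The estimates rest on $\|e^{t\Delta}f\|_\infty\le\|f\|_\infty$ and $\|\nabla e^{t\Delta}f\|_\infty\le Ct^{-1/2}\|f\|_\infty$, together with the key bound $\|\nabla e^{t\Delta}\mathbb P f\|_\infty\le Ct^{-1/2}\|f\|_\infty$ from \cite{Giga1} — crucially the operator $e^{t\Delta}\mathbb P\textmd{div}$ must be kept together, since $\mathbb P$ alone is unbounded on $L^\infty$. Writing the quadratic forcing in divergence form, the Duhamel term in the velocity equation obeys
$$
\Big\|\int_0^t e^{(t-s)\Delta}\mathbb P\,\textmd{div}(u\otimes u+\nabla d\odot\nabla d)\,ds\Big\|_\infty\le C\int_0^t(t-s)^{-1/2}\big(\|u\|_\infty^2+\|\nabla d\|_\infty^2\big)\,ds\le CT^{1/2}R^2,
$$
and applying $\nabla e^{(t-s)\Delta}$ to $|\nabla d|^2d$ and $u\cdot\nabla d$ gives the same $T^{1/2}$ gain for $\|\nabla d(t)\|_\infty$, the factor $\|d\|_\infty\le 2$ closing the cubic term. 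The non-decaying quantity $\|d(t)\|_\infty$ is handled separately: the $d$-equation yields the linear Gronwall inequality $\|d(t)\|_\infty\le 1+C\int_0^t(\|\nabla d\|_\infty^2\|d\|_\infty+\|u\|_\infty\|\nabla d\|_\infty)\,ds$, so $\|d(t)\|_\infty\le 2$ is preserved once $T$ is small. Choosing $R\sim 2M$ and $T$ with $CT^{1/2}R\le\tfrac12$ makes $\Phi$ a self-map and a contraction; the resulting condition $T^{1/2}\lesssim(CM)^{-1}$ is precisely the lower bound $T\ge CM^{-2}$ asserted in (iii), and uniqueness follows from the contraction.

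To prove $|d|=1$ I would derive the equation for $\phi:=|d|^2-1$. Using $d\cdot\Delta d=\tfrac12\Delta|d|^2-|\nabla d|^2$ and $d\cdot(u\cdot\nabla d)=\tfrac12 u\cdot\nabla|d|^2$ in \eqref{1.3} gives the linear parabolic equation
$$
\phi_t+u\cdot\nabla\phi=\Delta\phi+2|\nabla d|^2\phi,\qquad \phi|_{t=0}=0,
$$
whose only bounded solution is $\phi\equiv0$, by the maximum principle or a Gronwall argument in $L^\infty$. Since this manipulation needs $d$ to be twice differentiable, it must be performed on the smooth approximations below and passed to the limit.

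The core difficulty, and the main obstacle, is the regularity assertion (i): the fixed point solution need not a priori be smooth when the data are merely bounded. I would mollify the data to obtain $(u_0^\varepsilon,d_0^\varepsilon)$ with $\textmd{div}\,u_0^\varepsilon=0$, $|d_0^\varepsilon|=1$ and $\|u_0^\varepsilon\|_\infty+\|\nabla d_0^\varepsilon\|_\infty\le M$ uniformly in $\varepsilon$. The first paragraph furnishes smooth solutions $(u^\varepsilon,d^\varepsilon)$ on a common interval $[0,T]$ with $T\ge CM^{-2}$ \emph{independent of} $\varepsilon$, since the existence time depends only on the $L^\infty$ norms. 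The heart of the matter is an a priori estimate bounding higher derivatives by lower ones with the parabolic weight: differentiating the integral equations, placing the derivatives on the kernels, and reinvoking the $t^{-1/2}$-type smoothing, one proves inductively that $\|\nabla^k u^\varepsilon(t)\|_\infty+\|\nabla^{k+1}d^\varepsilon(t)\|_\infty\le C_k\,t^{-k/2}$ on $[\delta,T]$, with the nonlinear terms controlled by the already-established lower-order bounds; the corresponding $C^{1/2}$-in-time bound for $u^\varepsilon$ and Lipschitz-in-time bound for $d^\varepsilon$ are read off from the integral formulas. These bounds are uniform in $\varepsilon$, so a compactness and diagonal argument lets me pass to the limit, identify the limit with the mild solution, and transfer the regularity $u\in C^{1/2}([\delta,T];W^{k,\infty})$, $d\in Lip([\delta,T];W^{k+1,\infty})$, while also legitimising the $|d|=1$ computation above.

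Finally, for (ii) I would read off from the mild formulas that $\|u(t)\|_\infty\le\|u_0\|_\infty+CT^{1/2}R^2$ and $\|\nabla d(t)\|_\infty\le\|\nabla d_0\|_\infty+CT^{1/2}R^2$, giving $\limsup_{t\to0^+}\|u(t)\|_\infty\le\|u_0\|_\infty$ and likewise for $\|\nabla d\|_\infty$; the reverse inequalities follow from the weak-$*$ convergences $e^{t\Delta}u_0\rightharpoonup u_0$ and $e^{t\Delta}\nabla d_0\rightharpoonup\nabla d_0$ together with the weak-$*$ lower semicontinuity of the $L^\infty$ norm, yielding the stated limits. Continuity of the two norm functions on $(0,T]$ is a by-product of the regularity in (i), and continuity up to $t=0$ then follows by combining it with the established one-sided limits.
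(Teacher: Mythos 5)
Your overall strategy coincides with the paper's: a Banach fixed point built on the smoothing estimates of Lemma \ref{lem2.1} (in particular $\|e^{t\Delta}\mathbb P\,\textmd{div}f\|_\infty\le Ct^{-1/2}\|f\|_\infty$ from \cite{Giga1}), mollification of the data, inductive a priori bounds of higher derivatives in terms of the $L^\infty$ bounds away from $t=0$, a compactness/diagonal passage to the limit, the maximum principle for $|d|^2-1$ carried out on the smooth approximations, and the $\limsup$ bound plus weak-$*$ lower semicontinuity for (ii). The organizational difference --- you run the contraction directly at the level $\sup_t(\|u\|_\infty+\|\nabla d\|_\infty)$ (with the auxiliary bound on $\|d\|_\infty$) and read the time bound (iii) off the smallness condition, while the paper runs the contraction in $W^{k,\infty}\times W^{k+1,\infty}$ for every $k$ and obtains the $L^\infty$-data solution only as a limit of smooth ones --- is harmless and arguably more direct.

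There is, however, one step you state too casually, and it is precisely the step to which the paper devotes Section \ref{sec4}. You assert that the smooth approximations $(u^\varepsilon,d^\varepsilon)$ live on a common interval $[0,T]$ with $T\ge CM^{-2}$ ``since the existence time depends only on the $L^\infty$ norms.'' But the existence time produced by the contraction that yields a \emph{smooth} solution depends on $\|u_0^\varepsilon\|_{W^{k,\infty}}+\|d_0^\varepsilon\|_{W^{k+1,\infty}}$, which blows up as $\varepsilon\to0$; your $L^\infty$-level contraction does give a solution on $[0,T]$, but does not by itself show that this solution is smooth for smooth data. To obtain the common interval one must combine the a priori estimate (higher norms on $[\delta,T']$ controlled by the $L^\infty$ norms on $[0,T']$ --- your third paragraph, the paper's Proposition \ref{prop3.1}) with a continuation argument for the smooth solution up to the $L^\infty$ existence time, which is the content of Proposition \ref{prop4.1}. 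The ingredients are all present in your outline, so the gap is fillable, but the continuation step must be made explicit rather than asserted. A second, smaller point: uniqueness ``from the contraction'' only gives uniqueness within the contraction ball on the short contraction interval; the uniqueness among all bounded mild solutions on $[0,T]$ claimed in the theorem requires the separate Gronwall argument on the difference of two solutions, as carried out in Section \ref{sec5}.
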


Now, we give the definition of type I mild solution.

\begin{definition}\label{def1.2}
A mild solution $(u, d)$ defined on $\mathbb R^n\times[-1, 0)$ to the system (\ref{1.1})--(\ref{1.3}) is said to be a type I mild solution if
$$
\|u(t)\|_\infty+\|\nabla d(t)\|_\infty\leq C(-t)^{-1/2},\qquad t\in(-1, 0)
$$
for some positive constant $C$.
\end{definition}

Our blow up criterion are stated in the following two theorems.

\begin{theorem}\label{thm1.2}
Let $(u, d)$ be a type I mild solution to system (\ref{1.1})--(\ref{1.3}) in $\mathbb R^3\times(-1, 0)$. For given $\sigma>0$, let $\eta$ be a modulus, such that
\begin{eqnarray*}
  |\zeta(x,t)-\zeta(y, t)|&\leq& \eta(|x-y|) ,\qquad\forall x, y\in\Omega_\sigma(t),\\
  |d(x, t)-d(y,t)| &\leq&\eta(|x-y|),\qquad\forall x, y\in\mathbb R^3,
\end{eqnarray*}
where $\Omega_\sigma(t)=\big\{x\in\mathbb R^3\big||\omega(x,t)|>\sigma\big\}$, $\zeta=\frac{\omega}{|\omega|}$ and $\omega=\textmd{curl}u$. Then $(u, d)$ does not blow up at $t=0$, that is, we can extend $(u, d)$ to be a mild solution to system (\ref{1.1})--(\ref{1.3}) in $\mathbb R^3\times(-1, \varepsilon)$ for some $\varepsilon>0$.
\end{theorem}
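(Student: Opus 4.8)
The plan is to argue by contradiction via a parabolic blow-up (rescaling) procedure, following the strategy used for the Navier--Stokes equations in Giga \cite{Giga4} but inserting an extra decoupling step for the director field $d$. Suppose $(u,d)$ cannot be extended past $t=0$. By Theorem \ref{thm1.1}, $(u,d)$ is smooth in space on every slice $t<0$, so that $\omega=\textmd{curl}\,u$ and $\zeta=\omega/|\omega|$ are well defined. Non-extendability together with the type I bound should force genuine blow-up, so one can select sequences $t_k\uparrow0$ and points $x_k\in\mathbb R^3$ at which the vorticity attains, up to a fixed factor, its type I rate. Setting $\lambda_k=\sqrt{-t_k}\to0$, I would introduce the rescaled fields
$$
u_k(x,t)=\lambda_k\,u(x_k+\lambda_k x,\,t_k+\lambda_k^2t),\qquad d_k(x,t)=d(x_k+\lambda_k x,\,t_k+\lambda_k^2t).
$$
Scaling invariance of \eqref{1.1}--\eqref{1.3} makes each $(u_k,d_k)$ a mild solution, and a direct computation converts the type I bound into the uniform estimate $\|u_k(t)\|_\infty+\|\nabla d_k(t)\|_\infty\le C(1-t)^{-1/2}$, which is bounded on every compact subinterval of $(-\infty,0]$.

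The next step is to extract a limit. Feeding the uniform $L^\infty$ bounds into the time-interior regularity estimates of Theorem \ref{thm1.1} yields, for each compact $K\subset\mathbb R^3\times(-\infty,0]$, uniform bounds on all spatial derivatives of $u_k$ and $d_k$ together with the time modulus. By Arzel\`a--Ascoli and a diagonal argument, a subsequence converges locally uniformly, along with its spatial derivatives, to a limit $(\bar u,\bar d)$ that is a bounded ancient mild solution of \eqref{1.1}--\eqref{1.3} on $\mathbb R^3\times(-\infty,0]$ with $|\bar d|\equiv1$. The normalization at the blow-up points is arranged so that the limiting vorticity $\bar\omega=\textmd{curl}\,\bar u$ does not vanish identically near the origin; in particular $\bar u$ is nontrivial.

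The liquid-crystal-specific step is to decouple the limit. Since $\lambda_k\to0$, the uniform modulus for the director gives
$$
|d_k(x,t)-d_k(y,t)|\le\eta(\lambda_k|x-y|)\longrightarrow\eta(0^+)=0
$$
for each fixed $x,y,t$, so $\bar d(\cdot,t)$ is constant in space. Substituting $\nabla\bar d\equiv0$ into \eqref{1.3} gives $\bar d_t=0$ (equivalently, $\bar d$ is a bounded harmonic map, so Liouville applies), whence $\bar d$ is a constant unit vector. Consequently the coupling terms $\textmd{div}(\nabla\bar d\odot\nabla\bar d)$ and $|\nabla\bar d|^2\bar d$ vanish, and $\bar u$ is a bounded ancient mild solution of the pure Navier--Stokes equations. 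Because the vorticity direction is scale invariant, $\bar\zeta=\bar\omega/|\bar\omega|$ should inherit the bound $|\bar\zeta(x,t)-\bar\zeta(y,t)|\le\eta(0^+)=0$ on the set where $\bar\omega\neq0$, so the limiting vorticity points in a single fixed direction there.

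At this point the problem is reduced exactly to Giga's setting: a nontrivial bounded ancient Navier--Stokes flow whose vorticity has a constant direction is two-dimensional, and the Liouville theorem for bounded ancient two-dimensional Navier--Stokes solutions forces $\bar u$ to be constant, hence $\bar\omega\equiv0$. This contradicts the nontriviality of $\bar u$, and the contradiction shows that $(u,d)$ extends to a mild solution on $\mathbb R^3\times(-1,\varepsilon)$. I expect the main obstacle to be the rigorous construction of the blow-up limit: establishing the uniform higher-order estimates needed for compactness, fixing the normalization of $(x_k,t_k,\lambda_k)$ so that the limit is genuinely nontrivial, and, most delicately, tracking how the super-level set $\Omega_\sigma(t)$ and the vorticity-direction modulus transform under the anisotropic rescaling, so that the geometric condition truly survives in the limit rather than only formally.
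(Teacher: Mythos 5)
Your overall strategy (rescale near the putative singularity, use the modulus on $d$ to show the limiting director is constant, reduce to Giga--Miura's constant-vorticity-direction Liouville argument for Navier--Stokes) is exactly the paper's, but there is a genuine gap at the point you yourself flag as delicate: the nontriviality of the blow-up limit, and specifically of its vorticity. First, your selection of $(x_k,t_k)$ ``at which the vorticity attains, up to a fixed factor, its type I rate'' is unjustified: type I is only an \emph{upper} bound, and the lower bound that non-extendability actually supplies (via the existence-time estimate $T\geq C(\|u_0\|_\infty+\|\nabla d_0\|_\infty)^{-2}$) is $\|u(t)\|_\infty+\|\nabla d(t)\|_\infty\geq c(-t)^{-1/2}$, a statement about $u$ and $\nabla d$, not about $\omega=\mathrm{curl}\,u$. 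Nothing prevents the vorticity from growing strictly slower than the scaling rate $(-t)^{-1}$, in which case your $\lambda_k=\sqrt{-t_k}$ rescaling produces $\bar\omega\equiv 0$ near the origin and your intended contradiction evaporates. The paper sidesteps this by normalizing with $M_k=\sup_{-1\le t\le t_k}(\|u\|_\infty+\|\nabla d\|_\infty)(t)$ and centering at points where this supremum is nearly attained, which gives $|\bar u(0,0)|+|\nabla\bar d(0,0)|=1$ for free.

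Second, even with a correct normalization you only get $\bar u(0,0)\neq 0$ (after the director step kills $\nabla\bar d$), and your closing sentence --- Liouville forces $\bar u$ constant, ``this contradicts the nontriviality of $\bar u$'' --- does not close: a nonzero constant vector field is nontrivial yet has identically zero vorticity, so there is no contradiction between $\bar\omega\equiv 0$ and $\bar u\not\equiv 0$. You must separately prove $\bar\omega\not\equiv 0$, which is the paper's Step 3: if $\bar\omega\equiv 0$ then $\Delta\bar u=\nabla\,\mathrm{div}\,\bar u-\mathrm{curl}\,\mathrm{curl}\,\bar u=0$, so the Liouville theorem for bounded harmonic functions makes $\bar u$ spatially constant, the mild-solution formula then makes it constant in time, and finally the type I bound transfers to the rescaled solutions as $\|u_k(t)\|_\infty\leq C|t|^{-1/2}$, so letting $t\to-\infty$ forces the constant to vanish, contradicting $|\bar u(0,0)|=1$. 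This use of the type I hypothesis to obtain decay of the ancient limit at $t\to-\infty$ is an essential ingredient that your proposal omits entirely; without it the argument cannot distinguish the blow-up limit from a nonzero constant flow.
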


\begin{theorem}\label{thm1.3}
Let $(u, d)$ be a type I mild solution to system (\ref{1.1})--(\ref{1.3}) in $\mathbb R^3\times(-1, 0)$. For given $\sigma>0$, let $\eta$ be a modulus, such that
\begin{align*}
  &\int_{-1}^0\|\nabla\zeta\|_{L^b(\Omega_\sigma(t))}^adt<\infty,\quad\frac{2}{a}+\frac{3}{b}\leq1, ~~2\leq a<\infty,\\
  &|d(x, t)-d(y,t)| \leq\eta(|x-y|),\qquad\forall x, y\in\mathbb R^3,
\end{align*}
where $\Omega_\sigma(t)=\big\{x\in\mathbb R^3\big||\omega(x,t)|>\sigma\big\}$, $\zeta=\frac{\omega}{|\omega|}$ and $\omega=\textmd{curl}u$. Then $(u, d)$ does not blow up at $t=0$, that is, we can extend $(u, d)$ to be a mild solution to system (\ref{1.1})--(\ref{1.3}) in $\mathbb R^3\times(-1, \varepsilon)$ for some $\varepsilon>0$.
\end{theorem}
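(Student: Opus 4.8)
The plan is to argue by contradiction along the same blow-up scheme used (as announced in the introduction) for Theorem~\ref{thm1.2}, the only genuinely new point being the way the scaling-invariant integral bound on $\nabla\zeta$ is transferred to the blow-up limit. Suppose $(u,d)$ did blow up at $t=0$. Since the solution is of type I, the natural parabolic rescaling is admissible: pick base points $(x_k,t_k)$ with $t_k\uparrow0$ at which the singularity forms, set $\lambda_k=\sqrt{-t_k}\downarrow0$, and define
\[
u_k(x,t)=\lambda_k\,u(x_k+\lambda_k x,\,t_k+\lambda_k^2 t),\qquad d_k(x,t)=d(x_k+\lambda_k x,\,t_k+\lambda_k^2 t).
\]
A direct check shows that the coupled system (\ref{1.1})--(\ref{1.3}) is invariant under this rescaling (with $d$ unchanged in amplitude), so each $(u_k,d_k)$ is again a mild solution, and the type I bound gives the uniform estimate $\|u_k(t)\|_\infty+\|\nabla d_k(t)\|_\infty\le C(1-t)^{-1/2}$ on expanding backward cylinders; the choice of $(x_k,\lambda_k)$ is made so that the rescaled vorticity is nondegenerate near the origin. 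The interior regularity estimates of Theorem~\ref{thm1.1} then supply uniform bounds on all spatial derivatives of $(u_k,d_k)$ on compact sets away from the terminal time, so along a subsequence $(u_k,d_k)\to(\bar u,\bar d)$ strongly enough to pass to the limit in the mild formulation, with $(\bar u,\bar d)$ a bounded backward mild solution of (\ref{1.1})--(\ref{1.3}) on $\mathbb R^3\times(-\infty,0)$.

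Next I would decouple the limit. The uniform modulus hypothesis on $d$ is scale-breaking: for any $x,x'$,
\[
|d_k(x,t)-d_k(x',t)|\le\eta(\lambda_k|x-x'|)\longrightarrow0,
\]
so $\bar d$ is independent of $x$; combining this with $|\bar d|=1$ and the limiting equation (\ref{1.3}), the Liouville theorem for bounded harmonic functions (as indicated in the introduction) forces $\bar d$ to be a constant unit vector, whence $\nabla\bar d\equiv0$. Consequently the coupling $\mathrm{div}(\nabla\bar d\odot\nabla\bar d)$ vanishes and $\bar u$ is a bounded backward mild solution of the pure Navier-Stokes equations, exactly the object treated in Giga~\cite{Giga4}.

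The heart of the matter is to carry the hypothesis on $\nabla\zeta$ over to $\bar u$. Under the above rescaling the direction $\zeta=\omega/|\omega|$ is scale-invariant while $\nabla\zeta$ acquires a factor $\lambda_k$, and the threshold set transforms as $\{|\omega_k|>\sigma\}=\Omega_{\sigma\lambda_k^{-2}}(s)\subset\Omega_\sigma(s)$ with $s=t_k+\lambda_k^2 t\uparrow0$. Performing the change of variables in space and time one finds
\[
\int_{-T_0}^0\|\nabla\zeta_k(t)\|_{L^b(\Omega^k_\sigma(t))}^a\,dt
\le\lambda_k^{\,a(1-3/b)-2}\int_{s_k}^0\|\nabla\zeta(s)\|_{L^b(\Omega_\sigma(s))}^a\,ds,\qquad s_k\uparrow0.
\]
The condition $\tfrac2a+\tfrac3b\le1$ is precisely $a(1-3/b)-2\ge0$, so the prefactor does not grow; in the critical case $=1$ the right-hand side is a shrinking tail of a convergent integral, and in the subcritical case there is an additional vanishing power of $\lambda_k$. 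In either case the quantity tends to $0$, and by weak lower semicontinuity this yields $\nabla\bar\zeta\equiv0$ on $\{\bar\omega\neq0\}$; that is, the limiting Navier-Stokes flow has locally constant vorticity direction. From here the argument is identical to Giga~\cite{Giga4}: the Liouville type theorem for bounded backward Navier-Stokes solutions with constant vorticity direction (the geometric depletion reducing matters to the two dimensional Liouville theorem) forces $\bar u$ to be trivial, contradicting the nondegeneracy of the blow-up limit. This contradiction shows no blow up occurs and $(u,d)$ extends past $t=0$.

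I expect the main obstacle to lie in the compactness and convergence step rather than in the scaling arithmetic: one must ensure that the rescaled sequence converges strongly enough that both the nonlinear terms in the mild formulation and the geometric quantity $\nabla\zeta$ pass to the limit, and in particular that $\bar\omega\not\equiv0$ near the origin so that $\{\bar\omega\neq0\}$ is nonempty and the limiting vorticity direction argument has content. Since $\zeta=\omega/|\omega|$ is delicate precisely where $|\omega|$ is small, the restriction to $\Omega_\sigma$ and the subcritical scaling must be used to keep the degeneracy of $\zeta$ away from the region carrying the lower semicontinuity argument; securing all of these convergences from the interior estimates of Theorem~\ref{thm1.1} is where the real work lies.
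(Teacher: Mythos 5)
Your overall architecture (parabolic blow-up, decoupling of $\bar d$ via the scale-breaking modulus, transfer of the scaling-invariant $\nabla\zeta$ bound to the limit, then the Giga--Miura Liouville argument) is exactly the paper's, and your scaling arithmetic for the direction integral is correct: the exponent $a(1-3/b)-2\geq0$ is precisely $\tfrac2a+\tfrac3b\leq1$, and since with your normalization a fixed rescaled window $(-T_0,0)$ pulls back to a shrinking tail of $(-1,0)$, the tail of the convergent integral does the job. (The paper, which rescales the full interval, instead raises the threshold: it first shows $\int_{-1}^0\|\nabla\zeta\|^a_{L^b(\Omega_\tau(t))}dt\to0$ as $\tau\to\infty$ by dominated convergence and picks $\sigma_\varepsilon$ accordingly; your tail argument is a legitimate and arguably cleaner substitute for that step.)

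The genuine gap is the normalization of the blow-up sequence. You take $\lambda_k=\sqrt{-t_k}$, but the type I condition is only an \emph{upper} bound, so $\lambda_k\bigl(\|u(t_k)\|_\infty+\|\nabla d(t_k)\|_\infty\bigr)$ may tend to $0$ even though the solution blows up (e.g.\ at a slower rate), in which case $(\bar u,\bar d)$ is identically a constant and no contradiction is reached. Your parenthetical ``the choice of $(x_k,\lambda_k)$ is made so that the rescaled vorticity is nondegenerate near the origin'' is not something you can arrange: there is no reason the \emph{vorticity} should be large at a blow-up point at all. The paper resolves both issues differently: it sets $M_k=\sup_{-1\leq t\leq t_k}(\|u\|_\infty+\|\nabla d\|_\infty)(t)\to\infty$, picks $(x_k,t_k)$ with $|u(x_k,t_k)|+|\nabla d(x_k,t_k)|\geq M_k-1$, and rescales by $1/M_k$, which forces $|\bar u(0,0)|+|\nabla\bar d(0,0)|=1$; the type I hypothesis is then used (a) to transfer the bound $\|u_k(t)\|_\infty\leq C|t|^{-1/2}$ to the rescaled solutions and (b) in a separate step to rule out $\bar\omega\equiv0$: if $\bar\omega\equiv0$ then $\bar u$ is harmonic and bounded, hence a constant unit vector by Liouville and the mild formulation, contradicting $1=|\bar u|\leq C|t|^{-1/2}$ as $t\to-\infty$. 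Without replacing your $\sqrt{-t_k}$ normalization by this $M_k$ normalization (and supplying the indirect argument for $\bar\omega\not\equiv0$), the contradiction at the end of your proof does not materialize.
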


The rest of this paper is arranged as follows: in the next section, Section \ref{sec2}, we prove the local existence of mild solutions with smooth initial data by using the Banach fixed point argument; in Section \ref{sec3}, we do the a priori estimates on the higher derivatives in terms of the lower ones of the mild solutions; Section \ref{sec4} is employed to estimate the existence time of the smooth mild solutions in terms of the $L^\infty$ norm of the initial data; in section \ref{sec5}, using the results in the previous sections, we prove the local well-posedness of mild solutions with $L^\infty$ initial data and the regularities of the solutions; the proof of the blow up criterion is given in the last section.

\section{Local existence with smooth initial data}\label{sec2}

In this section, we prove the local existence of mild solutions with smooth initial data, that is the following proposition.

\begin{proposition}\label{prop2.1}
Let $k$ be a nonnegative integer, $u_0\in W^{k,\infty}$, $\textmd{div}u_0=0$, $d_0\in W^{k+1, \infty}$ and $|d_0|\leq 1$. Then there is a positive $T$, such that the system (\ref{1.1})--(\ref{1.4}) has a mild solution $(u, d)$ on $\mathbb R^n\times(0, T)$ satisfying $(u, d)\in L^\infty(0, T; W^{k,\infty}\times W^{k+1, \infty})$, $|d|\leq 1$ and
$$
\lim_{t\rightarrow 0^+}\|u(t)\|_\infty=\|u_0\|_\infty\quad\mbox{and}\quad\lim_{t\rightarrow 0^+}\|\nabla d(t)\|_\infty=\|\nabla d_0\|_\infty.
$$
\end{proposition}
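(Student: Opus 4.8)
The plan is to set up a Banach fixed point (contraction mapping) argument on the integral formulation from Definition~\ref{def1.1}. The natural solution space is $X_T = \{(u,d) : u \in C([0,T]; W^{k,\infty}),\ \nabla d \in C([0,T]; W^{k,\infty})\}$, equipped with a norm that captures both $\|u\|_{W^{k,\infty}}$ and $\|\nabla d\|_{W^{k,\infty}}$ uniformly in $t \in [0,T]$. I would define the nonlinear map $\Phi(u,d) = (\Phi_1(u,d), \Phi_2(u,d))$ by the two Duhamel integrals, and show that for $T$ small enough $\Phi$ maps a suitable closed ball around the linear evolution $(e^{t\Delta}u_0, e^{t\Delta}d_0)$ into itself and is a contraction there. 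The unique fixed point is then the desired mild solution.

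The key estimates driving everything are the smoothing bounds for the heat semigroup composed with the Helmholtz projection. First I would record, as stated in the introduction, the bound $\|\nabla e^{t\Delta}\mathbb{P} f\|_\infty \leq C t^{-1/2}\|f\|_\infty$ from \cite{Giga1}, together with the elementary facts $\|e^{t\Delta}g\|_\infty \leq \|g\|_\infty$ and $\|\nabla e^{t\Delta} g\|_\infty \leq C t^{-1/2}\|g\|_\infty$; applying the latter to spatial derivatives gives the $W^{k,\infty}$ versions by commuting $\partial^\alpha$ with the semigroup. For the velocity equation, the nonlinearity $\mathbb{P}\,\mathrm{div}(u\otimes u + \nabla d \odot \nabla d)$ is handled by moving the divergence onto the semigroup: estimating $\int_0^t \|\nabla e^{(t-s)\Delta}\mathbb{P}(u\otimes u + \nabla d\odot\nabla d)\|_\infty\,ds \leq C\int_0^t (t-s)^{-1/2}\,ds\,\cdot(\|u\|_\infty^2 + \|\nabla d\|_\infty^2)$, and the $\int_0^t (t-s)^{-1/2}\,ds = 2\sqrt{t}$ factor supplies the small constant. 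For the $d$-equation, $\int_0^t \|e^{(t-s)\Delta}(|\nabla d|^2 d - (u\cdot\nabla)d)\|_\infty\,ds$ and its gradient are estimated similarly, with the gradient bringing in another $(t-s)^{-1/2}$ singularity that is still integrable. Carrying the derivatives $\partial^\alpha$ for $|\alpha|\le k$ through, the Leibniz rule produces products of $W^{k,\infty}$ norms, all controlled by the ball radius, so the whole scheme closes with a factor of $\sqrt{T}$ (or $T$) multiplying the radius, which can be made smaller than one by choosing $T$ small.

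After obtaining the fixed point, I would verify the remaining assertions. The property $|d|\leq 1$ I would establish by a maximum-principle type argument applied to $|d|^2$: testing the $d$-equation structure (or working with the smooth solution obtained) one checks that $\frac{1}{2}(\partial_t + u\cdot\nabla - \Delta)|d|^2 = -|\nabla d|^2 + |\nabla d|^2|d|^2 = |\nabla d|^2(|d|^2 - 1)$, so the quantity $|d|^2 - 1$ satisfies a parabolic inequality with no source when $|d|\le 1$ initially, forcing $|d|\leq 1$ to persist; this requires enough regularity to justify the maximum principle, which the constructed solution enjoys. Finally, the continuity statements $\lim_{t\to 0^+}\|u(t)\|_\infty = \|u_0\|_\infty$ and $\lim_{t\to 0^+}\|\nabla d(t)\|_\infty = \|\nabla d_0\|_\infty$ follow from the strong continuity of $e^{t\Delta}$ at $t=0$ on the closure of nice functions combined with the fact that the Duhamel corrections vanish like $\sqrt{t}$ as $t\to 0^+$, so the solution approaches the initial data in the relevant norms.

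I expect the main obstacle to be bookkeeping rather than a deep difficulty: tracking the $W^{k,\infty}$ norms through the quadratic and cubic nonlinearities (especially the cubic term $|\nabla d|^2 d$ and the transport term $(u\cdot\nabla)d$ in the $d$-equation, where the derivative count must stay balanced so that no term exceeds $k+1$ derivatives on $d$) while ensuring every time integral of the form $\int_0^t (t-s)^{-1/2}\,ds$ remains finite. The genuinely delicate point is handling the $\nabla d \odot \nabla d$ forcing in the velocity equation, since it couples the two unknowns at the level of first derivatives of $d$; one must verify that the $W^{k,\infty}$ regularity of $u$ is compatible with only $W^{k+1,\infty}$ control on $d$ (equivalently $W^{k,\infty}$ on $\nabla d$), which is exactly why the solution space pairs $u \in W^{k,\infty}$ with $\nabla d \in W^{k,\infty}$. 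Once the norms are matched correctly, the contraction estimate and the self-mapping estimate are routine consequences of the semigroup bounds.
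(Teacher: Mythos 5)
Your overall strategy coincides with the paper's: a Banach fixed point argument in $L^\infty(0,T;W^{k,\infty}\times W^{k+1,\infty})$ built on the estimates $\|e^{t\Delta}\mathbb P\partial_i f\|_\infty\leq Ct^{-1/2}\|f\|_\infty$ and $\|\partial_x^\beta e^{t\Delta}f\|_\infty\leq Ct^{-|\beta|/2}\|f\|_\infty$, the same Leibniz-rule bookkeeping for the quadratic and cubic nonlinearities, and the same maximum-principle computation for $l=|d|^2-1$ (your identity $\tfrac12(\partial_t-\Delta+u\cdot\nabla)|d|^2=|\nabla d|^2(|d|^2-1)$ is exactly the equation $\partial_t l-\Delta l-2|\nabla d|^2l+u\cdot\nabla l=0$ used in the paper). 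Up to and including $|d|\leq1$, your plan is sound and matches the paper.

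The genuine gap is in your last step, the proof that $\lim_{t\to0^+}\|u(t)\|_\infty=\|u_0\|_\infty$ and $\lim_{t\to0^+}\|\nabla d(t)\|_\infty=\|\nabla d_0\|_\infty$. You derive these from ``strong continuity of $e^{t\Delta}$ at $t=0$'' plus the $O(\sqrt t)$ decay of the Duhamel corrections, i.e.\ from convergence of $u(t)$ to $u_0$ \emph{in the $L^\infty$ norm}. But the heat semigroup is not strongly continuous on $L^\infty$: $\|e^{t\Delta}u_0-u_0\|_\infty\to0$ requires $u_0$ to be uniformly continuous, and the proposition allows $k=0$, where $u_0$ and $\nabla d_0$ are merely bounded. (This case is not decorative: the whole point of the paper is $L^\infty$ data, and the same continuity-of-norms argument is invoked again in the proof of Theorem \ref{thm1.1}(ii) for genuinely rough data.) Your hedge ``on the closure of nice functions'' is precisely the obstruction — general $L^\infty$ data does not lie in that closure, and indeed $u(t)\to u_0$ in $L^\infty$ is simply false in general. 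The assertion to be proved is only the convergence of the \emph{norms}, which survives. The paper's route is: the semigroup contraction on $L^\infty$ plus the $O(\sqrt t)$ Duhamel bound give $\limsup_{t\to0^+}\|u(t)\|_\infty\leq\|u_0\|_\infty$; separately, testing the Duhamel formula against $\varphi\in C_0^\infty$ and moving the semigroup onto $\varphi$ (where strong $L^1$-continuity does hold) shows $u(t)\rightharpoonup^* u_0$ and $\nabla d(t)\rightharpoonup^*\nabla d_0$ weak-star in $L^\infty$, whence $\|u_0\|_\infty\leq\liminf_{t\to0^+}\|u(t)\|_\infty$ by weak-star lower semicontinuity of the norm. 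You need this two-sided argument (or a restriction to $k\geq1$, where $W^{1,\infty}$ data is Lipschitz and your argument is fine). A related, smaller issue: for the same reason your ansatz space $C([0,T];W^{k,\infty})$ with continuity down to $t=0$ is not preserved by the linear part $e^{t\Delta}u_0$ when $k=0$; the paper's choice of $L^\infty$ in time avoids this.
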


We are going to use Banach fixed point argument to prove the above proposition. As a preparation, the following lemma plays a key role in our argument.

\begin{lemma}\label{lem2.1}
Let $n\geq 1$ and $1\leq p\leq q\leq\infty$. Then it follows
\begin{eqnarray*}
  \|\partial_x^\beta e^{t\Delta}f\|_q &\leq& Ct^{-\frac{n}{2}\left(\frac{1}{p}-\frac{1}{q}\right)-\frac{|\beta|}{2}}\|f\|_p, \\
  \|e^{t\Delta}\mathbb P\partial_i f\|_p &\leq& Ct^{-\frac{1}{2}}\|f\|_p
\end{eqnarray*}
for any $t>0$, $\beta\in \mathbb N_0^n$ and $f\in L^p$, where $C$ is a positive constant depending only on $n$.
\end{lemma}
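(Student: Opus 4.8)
The plan is to treat both bounds as convolution estimates for the Gaussian heat kernel and its relatives, reducing everything to Young's inequality together with a scaling argument.

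For the first estimate, recall $e^{t\Delta}f = G_t * f$ with $G_t(x) = (4\pi t)^{-n/2}e^{-|x|^2/(4t)}$, so that $\partial_x^\beta e^{t\Delta}f = (\partial_x^\beta G_t) * f$. First I would record the scaling identity $G_t(x) = t^{-n/2}G_1(x/\sqrt t)$, which differentiates to $\partial_x^\beta G_t(x) = t^{-n/2 - |\beta|/2}(\partial^\beta G_1)(x/\sqrt t)$. Taking $L^r$ norms and changing variables $y = x/\sqrt t$ yields
$$\|\partial_x^\beta G_t\|_r = t^{-\frac{n}{2}\left(1 - \frac 1r\right) - \frac{|\beta|}{2}}\|\partial^\beta G_1\|_r,$$
and since every derivative of the Gaussian decays rapidly, $\|\partial^\beta G_1\|_r$ is a finite constant depending only on $n,\beta,r$. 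Young's convolution inequality with exponents satisfying $1 + \frac 1q = \frac 1r + \frac 1p$, i.e. $1 - \frac 1r = \frac 1p - \frac 1q$ (which forces $r\geq 1$ precisely because $p\leq q$), then gives
$$\|\partial_x^\beta e^{t\Delta}f\|_q \leq \|\partial_x^\beta G_t\|_r\|f\|_p \leq C t^{-\frac n2\left(\frac 1p - \frac 1q\right) - \frac{|\beta|}{2}}\|f\|_p,$$
which is the first claim.

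For the second estimate the same philosophy applies, but the obstacle is that the Helmholtz projection $\mathbb P = I + \nabla(-\Delta)^{-1}\mathrm{div}$ is a Riesz-transform-type singular integral operator, hence bounded on $L^p$ only for $1<p<\infty$ and, in particular, not on $L^1$ or $L^\infty$; one therefore cannot simply compose the $L^p$ bound for $\mathbb P$ with the first estimate to reach the endpoints $p\in\{1,\infty\}$. The resolution is to regard $e^{t\Delta}\mathbb P\partial_i$ as a single convolution operator $f\mapsto K_t * f$ and to show that its kernel is \emph{integrable} with the correct time weight, namely $\|K_t\|_1 \leq Ct^{-1/2}$. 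Young's inequality with this $L^1$ kernel then delivers $\|K_t * f\|_p \leq \|K_t\|_1\|f\|_p \leq Ct^{-1/2}\|f\|_p$ uniformly across the full range $1\leq p\leq\infty$, with no distinction between interior and endpoint exponents.

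To produce the kernel bound I would work on the Fourier side, where the $(j,k)$-component of $e^{t\Delta}\mathbb P\partial_i$ has symbol $m_t(\xi) = e^{-t|\xi|^2}(i\xi_i)\big(\delta_{jk} - \xi_j\xi_k/|\xi|^2\big)$. The substitution $\xi = \zeta/\sqrt t$ in the inverse Fourier transform gives the scaling $K_t(x) = t^{-(n+1)/2}K_1(x/\sqrt t)$, whence $\|K_t\|_1 = t^{-1/2}\|K_1\|_1$, so it suffices to prove $K_1\in L^1(\mathbb R^n)$. The symbol $m_1$ is smooth away from the origin, is bounded by $C|\xi|$ near the origin (the vanishing factor $\xi_i$ tames the degree-zero Riesz factor), and decays with all its derivatives faster than any polynomial thanks to the Gaussian; hence $m_1\in L^1_\xi$, so $K_1$ is bounded and thus integrable on $\{|x|\leq 1\}$, while integrating by parts $N$ times against $e^{ix\cdot\xi}$ on the region away from the origin yields $|K_1(x)|\leq C_N|x|^{-N}$ for $|x|\geq 1$, giving integrability at infinity once $N>n$. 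The hard part is the only step beyond bookkeeping: controlling the derivatives of the degree-zero factor near $\xi=0$ inside these integrations by parts. This is precisely the kernel computation carried out in \cite{Giga1}, of which the stated $L^\infty$ bound $\|\nabla e^{t\Delta}\mathbb P f\|_\infty\leq Ct^{-1/2}\|f\|_\infty$ is a special instance, and I would invoke it directly.
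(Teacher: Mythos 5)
Your proposal is correct and matches the paper, whose entire proof is a one-line citation: the first estimate is declared ``well known'' (your Young-plus-scaling argument is the standard proof of it) and the second is referred to \cite{Giga1}, exactly where you also send the genuinely hard step, namely the integrability of the kernel of $e^{\Delta}\mathbb P\partial_i$ despite the singularity of the Riesz symbol at the origin. One small caution on your sketch: repeated integration by parts does not give $|K_1(x)|\leq C_N|x|^{-N}$ for arbitrary $N$, since each derivative of the degree-zero factor costs a power of $|\xi|^{-1}$ near the origin; the true decay is $|x|^{-n-1}$, which is still exactly enough for $K_1\in L^1$, and you correctly flag this as the step to be taken from \cite{Giga1}.
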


\begin{proof}
The first one well known, while the second one can be found in \cite{Giga1}.
\end{proof}

Now we define the Banach spaces we will work in and the map being considered. Given $T>0$ and nonnegative integer $k$, denote by $X_T^k$ the space
$$
X_T^k=\{(u, d) | (u, d)\in L^\infty(0, T; W^{k,\infty}\times W^{k+1, \infty})\}
$$
with norm
$$
\|(u, d)\|_{X_T^k}=\|u\|_{L^\infty(0, T; W^{k,\infty})}+\|d\|_{L^\infty(0, T; W^{k+1, \infty})}.
$$
Then $X_T^k$ is a Banach space. Denote by $\mathcal B_R$ be the ball in $X_T^k$ with radius $R$ and center zero. Given $(u_0, d_0)\in W^{k,\infty}\times W^{k+1,\infty}$ with $\textmd{div}u_0=0$ and $|d_0|\leq1$. Define operator $\mathcal S$ on $X_T^k$ as $\mathcal S(u, d) = (\tilde u, \tilde d) $ with $\tilde u$ and $\tilde d$ given by
\begin{eqnarray*}
  \tilde u(t) &=& e^{t\Delta}u_0-\int_0^te^{(t-s)\Delta }\mathbb P\textmd{div}(u\otimes u+\nabla d\odot\nabla d)ds \\
  \tilde d(t) &=& e^{t\Delta}d_0+\int_0^te^{(t-s)\Delta}(|\nabla d|^2d-(u\cdot\nabla)d)ds
\end{eqnarray*}
for any $0\leq t\leq T$.

\begin{lemma}\label{lem2.2}
Let $\mathcal S$ be the map defined in the above. Then $\mathcal S$ maps $X_T^k$ into $X_T^k$, and the following estimates hold ture
$$
\|\mathcal S(u, d)\|_{X_T^k}\leq C(\|u_0\|_{W^{k,\infty}}+\|d_0\|_{W^{k+1,\infty}})+C(T^{1/2}+T)(\|(u, d)\|_{X_T^k}^2+\|(u, d)\|_{X_T^k}^3)
$$
for any $(u, d)\in X_T^k$, where $C$ is a positive constant depending only on $n$ and $k$.
\end{lemma}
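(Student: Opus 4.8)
The plan is to estimate the two components of $\mathcal S(u,d)=(\tilde u,\tilde d)$ piece by piece, treating the linear (free-evolution) parts by the boundedness of the heat semigroup on $W^{m,\infty}$ and the nonlinear (Duhamel) parts by Lemma \ref{lem2.1} together with the fact that $W^{m,\infty}$ is a multiplicative algebra, so that by the Leibniz rule $\|fg\|_{W^{m,\infty}}\leq C\|f\|_{W^{m,\infty}}\|g\|_{W^{m,\infty}}$. Since $e^{t\Delta}$ commutes with $\partial_x^\beta$ and is a contraction on $L^\infty$, the linear parts obey $\|e^{t\Delta}u_0\|_{W^{k,\infty}}\leq\|u_0\|_{W^{k,\infty}}$ and $\|e^{t\Delta}d_0\|_{W^{k+1,\infty}}\leq\|d_0\|_{W^{k+1,\infty}}$, which supplies the first group of terms on the right-hand side.

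For the velocity Duhamel term, set $G=u\otimes u+\nabla d\odot\nabla d$, so that $\textmd{div}\,G$ is a finite sum of first derivatives $\partial_iG_{ij}$. Commuting $\partial_x^\beta$ (with $|\beta|\leq k$) past the semigroup and the Helmholtz projection and applying the second inequality of Lemma \ref{lem2.1},
$$
\|\partial_x^\beta e^{(t-s)\Delta}\mathbb P\partial_iG_{ij}\|_\infty=\|e^{(t-s)\Delta}\mathbb P\partial_i\partial_x^\beta G_{ij}\|_\infty\leq C(t-s)^{-1/2}\|\partial_x^\beta G_{ij}\|_\infty.
$$
The algebra property gives $\|G\|_{W^{k,\infty}}\leq C(\|u\|_{W^{k,\infty}}^2+\|\nabla d\|_{W^{k,\infty}}^2)\leq C\|(u,d)\|_{X_T^k}^2$, and since $\int_0^t(t-s)^{-1/2}ds=2\sqrt t\leq 2\sqrt T$, integrating in $s$ and taking the supremum over $t\in[0,T]$ and $|\beta|\leq k$ yields a bound $CT^{1/2}\|(u,d)\|_{X_T^k}^2$ for the velocity Duhamel term.

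The director Duhamel term is the delicate point, and the main obstacle of the proof. Here I must control $k+1$ spatial derivatives of $\int_0^te^{(t-s)\Delta}(|\nabla d|^2d-(u\cdot\nabla)d)\,ds$, whereas the nonlinearity $h:=|\nabla d|^2d-(u\cdot\nabla)d$ is only controlled in $W^{k,\infty}$ by the $X_T^k$ norm; naively distributing all $k+1$ derivatives onto $h$ would require $W^{k+2,\infty}$ regularity of $d$, which is unavailable. The remedy is to exploit the smoothing of the heat semigroup: for a multi-index $\beta$ with $1\leq|\beta|\leq k+1$, write $\partial_x^\beta=\partial_i\partial_x^{\beta'}$ with $|\beta'|=|\beta|-1\leq k$ and use the first inequality of Lemma \ref{lem2.1} with $p=q=\infty$ to shift one derivative onto the kernel,
$$
\|\partial_x^\beta e^{(t-s)\Delta}h\|_\infty=\|\partial_ie^{(t-s)\Delta}\partial_x^{\beta'}h\|_\infty\leq C(t-s)^{-1/2}\|\partial_x^{\beta'}h\|_\infty\leq C(t-s)^{-1/2}\|h\|_{W^{k,\infty}},
$$
so that only $k$ derivatives land on $h$; for $\beta=0$ one instead uses the contraction $\|e^{(t-s)\Delta}h\|_\infty\leq\|h\|_\infty$, producing a factor $t\leq T$ after integration. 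The algebra property bounds $\|h\|_{W^{k,\infty}}\leq C(\|d\|_{W^{k+1,\infty}}^3+\|u\|_{W^{k,\infty}}\|d\|_{W^{k+1,\infty}})\leq C(\|(u,d)\|_{X_T^k}^2+\|(u,d)\|_{X_T^k}^3)$, the cubic term coming from $|\nabla d|^2d$ and the quadratic one from the transport term $(u\cdot\nabla)d$.

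Collecting the four contributions, the cases $|\beta|\geq1$ contribute the time factor $T^{1/2}$ while $|\beta|=0$ contributes $T$, and the nonlinear norms assemble into $\|(u,d)\|_{X_T^k}^2+\|(u,d)\|_{X_T^k}^3$, which is exactly the claimed inequality; in particular the right-hand side is finite, so $\mathcal S(u,d)\in X_T^k$ and $\mathcal S$ maps $X_T^k$ into itself. The only routine points I would verify in passing are the commutation of $\partial_x^\beta$, $e^{t\Delta}$ and $\mathbb P$ (all constant-coefficient Fourier multipliers) and the Leibniz/Moser estimate for the algebra $W^{m,\infty}$; the genuinely substantive step is the derivative-counting for the director equation described above.
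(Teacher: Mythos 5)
Your proposal is correct and follows essentially the same route as the paper: the linear parts are handled by the contraction property of $e^{t\Delta}$ on $W^{m,\infty}$, the velocity Duhamel term by the $\|e^{t\Delta}\mathbb P\partial_i f\|_\infty\leq Ct^{-1/2}\|f\|_\infty$ estimate combined with the Leibniz rule, and the director Duhamel term by shifting exactly one derivative onto the heat kernel (the paper's $\|\nabla^m e^{(t-s)\Delta}h\|_\infty\leq C(t-s)^{-1/2}\|\nabla^{m-1}h\|_\infty$ for $1\leq m\leq k+1$, with the zeroth-order case treated separately to produce the factor $T$). The derivative-counting issue you flag as the delicate point is resolved in the paper in precisely the way you describe.
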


\begin{proof}
Take arbitrary $(u, d)\in X_T^k$. By Lemma \ref{lem2.1}, it follows
\begin{align*}
\|\nabla^l\tilde u\|_\infty(t)\leq&\|\nabla^le^{t\Delta}u_0\|_\infty+\int_0^t\|e^{(t-s)\Delta}\mathbb P\textmd{div}\nabla^l(u\otimes u+\nabla d\odot\nabla d)\|_\infty ds\\
\leq&C\|\nabla^lu_0\|_\infty+C\int_0^t(t-s)^{-1/2}\sum_{i=0}^l(\|\nabla^iu\|_\infty\|\nabla^{l-i}u\|_\infty\\
&+\|\nabla^{i+1} d\|_\infty\|\nabla^{l-i+1}d\|_\infty)ds\\
\leq&C\|\nabla^lu_0\|_\infty+C\int_0^t(t-s)^{-1/2}(\|u\|_{W^{k,\infty}}^2+\|d\|_{W^{k+1,\infty}}^2)ds
\end{align*}
for any $0\leq l\leq k$, and
\begin{align*}
\|\nabla^m\tilde d\|_\infty(t)\leq&\|\nabla^me^{t\Delta}d_0\|_\infty+\int_0^t\|\nabla^me^{(t-s)\Delta}(|\nabla d|^2d-(u\cdot\nabla )d)\|_\infty ds\\
\leq&C\|\nabla^md_0\|_\infty+C\int_0^t(t-s)^{-1/2}\|\nabla^{m-1}(|\nabla d|^2d-(u\cdot\nabla)d\|_\infty)ds\\
\leq&C\|\nabla^m d_0\|_\infty+C\int_0^t(t-s)^{-1/2}\sum_{{0\leq i,j\leq m-1}\atop{i+j\leq m-1}}(\|\nabla^iu\|_\infty\|\nabla^{m-i}d\|_\infty\\
&+\|\nabla^{i+1}d\|_\infty\|\nabla^{j+1}d\|_\infty\|\nabla^{m-1-i-j}d\|_\infty)ds\\
\leq&C\|\nabla^md_0\|_\infty+C\int_0^t(t-s)^{-1/2}(\|d\|_{W^{k+1,\infty}}^3+\|u\|_{W^{k,\infty}}\|d\|_{W^{k+1,\infty}})ds
\end{align*}
for any $1\leq m\leq k+1$ and
$$
\|\tilde d\|_\infty(t)\leq C\|d_0\|_\infty+C\int_0^t(\|\nabla d\|_\infty^2\|d\|_\infty+\|u\|_\infty\|\nabla d\|_\infty)ds,
$$
and thus
\begin{align*}
&\|\tilde u(t)\|_{W^{k,\infty}}+\|\tilde d(t)\|_{W^{k+1,\infty}}\\
\leq&C(\|u_0\|_{W^{k,\infty}}+\|d_0\|_{W^{k+1,\infty}})+C\int_0^t(t-s)^{-1/2}(\|u\|_{W^{k,\infty}}^2+\|d\|_{W^{k+1,\infty}}^2+\|d\|_{W^{k+1,\infty}}^3)ds\\
&+C\int_0^t(\|\nabla d\|_\infty^2\|d\|_\infty+\|u\|_\infty\|\nabla d\|_\infty)ds\\
\leq&C(\|u_0\|_{W^{k,\infty}}+\|d_0\|_{W^{k+1,\infty}})+C(T^{1/2}+T)(\|(u, d)\|_{X_T^k}^2+\|(u, d)\|_{X_T^k}^3),
\end{align*}
which gives
$$
\|\mathcal S(u,d)\|_{X_T^k}\leq C(\|u_0\|_{W^{k,\infty}}+\|d_0\|_{W^{k+1,\infty}})+C(T^{1/2}+T)(\|(u, d)\|_{X_T^k}^2+\|(u, d)\|_{X_T^k}^3),
$$
completing the proof.
\end{proof}

\begin{lemma}\label{lem2.3}
The following estimates hold true
$$
\|\mathcal S(u, d)-\mathcal S(v,n)\|_{X_T^k}\leq C(K+K^2)(T^{1/2}+T)\|(u,d)-(v,n)\|_{X_T^k}
$$
for any $(u,d), (v,n)\in \mathcal B_K$, where $C$ is a positive constant depending only on $n$ and $k$.
\end{lemma}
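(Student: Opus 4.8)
The plan is to follow the computation in the proof of Lemma~\ref{lem2.2} almost verbatim, but applied to the differences rather than to the solutions themselves. Write $(\tilde u,\tilde d)=\mathcal S(u,d)$ and $(\tilde v,\tilde n)=\mathcal S(v,n)$, and set $U=u-v$ and $D=d-n$. Since the initial data enter $\mathcal S$ only through the linear terms $e^{t\Delta}u_0$ and $e^{t\Delta}d_0$, which are common to both inputs, these cancel in the difference; hence $\tilde u-\tilde v$ and $\tilde d-\tilde n$ equal precisely the Duhamel integrals of the differences of the nonlinear terms, and I only need to estimate those.

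The key preliminary step is to linearize each nonlinearity so that in every resulting product exactly one factor is a difference and the remaining factors are among the original functions. For the velocity equation I use
\[
u\otimes u-v\otimes v=U\otimes u+v\otimes U,\qquad \nabla d\odot\nabla d-\nabla n\odot\nabla n=\nabla D\odot\nabla d+\nabla n\odot\nabla D,
\]
and for the direction equation
\[
(u\cdot\nabla)d-(v\cdot\nabla)n=(U\cdot\nabla)d+(v\cdot\nabla)D,
\]
together with the trilinear splitting
\[
|\nabla d|^2d-|\nabla n|^2n=|\nabla d|^2\,D+(\nabla d:\nabla D)\,n+(\nabla D:\nabla n)\,n .
\]
In each display every term carries exactly one difference factor ($U$, $D$ or $\nabla D$) and one or two base factors.

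With these identities in hand, the estimation proceeds exactly as in Lemma~\ref{lem2.2}. I apply Lemma~\ref{lem2.1} to put one derivative onto the heat kernel --- producing the kernel $(t-s)^{-1/2}$ for the velocity Duhamel term and for the $\nabla^m$ ($1\le m\le k+1$) estimate of $\tilde d-\tilde n$, and the kernel $1$ for the $L^\infty$ estimate of $\tilde d-\tilde n$ itself --- distribute the remaining derivatives up to order $k$ (respectively $k+1$) by the Leibniz rule, and bound each factor in $L^\infty$. Because every product contains exactly one difference factor, that factor is controlled by $\|(u,d)-(v,n)\|_{X_T^k}$, while the remaining base factors, belonging to $\mathcal B_K$, are each bounded by $K$. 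Thus the bilinear terms contribute a factor $K$ and the trilinear term a factor $K^2$. Integrating the kernels over $[0,t]\subset[0,T]$ gives $\int_0^t(t-s)^{-1/2}\,ds\le 2T^{1/2}$ and $\int_0^t ds\le T$, which yields the prefactor $T^{1/2}+T$.

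Collecting these bounds and taking the supremum over $t\in[0,T]$ produces the claimed inequality with a constant $C$ depending only on $n$ and $k$. I do not expect any genuine obstacle here, since the structure is identical to Lemma~\ref{lem2.2}; the only point that requires care is the bookkeeping of the trilinear difference, where one must check that each of its three terms indeed carries exactly one difference factor, so that the prefactor is $K^2$ and not $K$ or $K^3$, and that the Leibniz expansion of $\nabla^{m-1}$ acting on these products is absorbed into the full $W^{k+1,\infty}$ norms, just as in the last two displays of the proof of Lemma~\ref{lem2.2}.
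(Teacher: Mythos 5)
Your proposal is correct and follows essentially the same route as the paper: cancel the common linear terms, telescope each nonlinearity so that every product carries exactly one difference factor, and then run the Lemma~\ref{lem2.1} plus Leibniz-rule estimates of Lemma~\ref{lem2.2}, with the $(t-s)^{-1/2}$ kernel giving $T^{1/2}$ for the velocity and the $\nabla^m(\tilde d-\tilde n)$ terms and the plain kernel giving $T$ for $\|\tilde d-\tilde n\|_\infty$. Your trilinear splitting of $|\nabla d|^2d-|\nabla n|^2n$ is written as three terms rather than the paper's two, but it is an equivalent telescoping with the same $K^2$ count, so nothing further is needed.
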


\begin{proof}
Take arbitrary $(u, d), (v,n)\in \mathcal B_K$. By definition $\mathcal S(u,d)=(\tilde u,\tilde d)$ and $\mathcal S(v,n)=(\tilde v,\tilde n)$ and
\begin{align*}
\tilde u(t)-\tilde v(t)=&\int_0^t e^{(t-s)\Delta}\mathbb P\textmd{div}(v\otimes v-u\otimes u+\nabla n\odot\nabla n-\nabla d\odot\nabla d)ds\\
=&\int_0^te^{(t-s)\Delta}\mathbb P\textmd{div}((v-u)\otimes v+u\otimes(v-u)\\
&+(\nabla n-\nabla d)\odot\nabla n+\nabla d\odot(\nabla n-\nabla d))ds,
\end{align*}
and
\begin{align*}
\tilde d(t)-\tilde n(t)=&\int_0^te^{(t-s)\Delta}(|\nabla d|^2d-|\nabla n|^2n+(v\cdot\nabla)n-(u\cdot\nabla)d)ds\\
=&\int_0^te^{(t-s)\Delta}((\nabla d-\nabla n)(\nabla d+\nabla n)d+|\nabla n|^2(d-n)\\
&+(v-u)\nabla n+u\nabla(n-d))ds.
\end{align*}
Hence, by Lemma \ref{lem2.1}, it follows
\begin{align*}
\|\nabla^l(\tilde u-\tilde v)\|_{\infty}(t)\leq&\int_0^t\|e^{(t-s)\Delta}\mathbb P\textmd{div}\nabla^l((v-u)\otimes v+u\otimes(v-u)\\
&+(\nabla n-\nabla d)\odot\nabla n+\nabla d\odot(\nabla n-\nabla d))ds\\
\leq&C\int_0^t(t-s)^{-1/2}\sum_{i=0}^l[\|\nabla^i(v-u)\|_\infty(\|\nabla^{l-i}u\|_\infty+\|\nabla^{l-i}v\|_\infty)\\
&+\|\nabla^{i+1}(n-d)\|_\infty(\|\nabla^{l-i+1}n\|_\infty+\|\nabla^{l-i+1}d\|_\infty)]ds\\
\leq&C\int_0^t(t-s)^{-1/2}(\|u\|_{W^{k,\infty}}+\|v\|_{W^{k,\infty}}+\|d\|_{W^{k+1,\infty}}+\|n\|_{W^{k+1,\infty}})\\
&\times(\|u-v\|_{W^{k,\infty}}+\|d-n\|_{W^{k+1,\infty}})ds\\
\leq&CK\|(u,d)-(v,n)\|_{X_T^k}\int_0^t(t-s)^{-1/2}ds\\
=&CT^{1/2}K\|(u,d)-(v,n)\|_{X_T^k}
\end{align*}
for any $0\leq l\leq k$, and
\begin{align*}
&\|\nabla^m(\tilde d-\tilde n)\|_\infty(t)\\
\leq&\int_0^t\|\nabla e^{(t-s)\Delta}\nabla^{m-1}((\nabla d-\nabla n)(\nabla d+\nabla n)d+|\nabla n|^2(d-n)\\
&+(v-u)\nabla n+u\nabla(n-d))\|_\infty ds\\
\leq&C\int_0^t(t-s)^{-1/2}\sum_{{0\leq i,j\leq m-1}\atop{i+j\leq m-1}}(\|\nabla^{i+1}(d-n)\|_\infty\|\nabla^{j+1}(d+n)\|_\infty\|\nabla^{m-1-i-j}d\|_\infty\\
&+\|\nabla^{i+1}n\|_\infty\|\nabla^{j+1}n\|_\infty\|\nabla^{m-1-i-j}(d-n)\|_\infty+\|\nabla^i(v-u)\|_\infty\|\nabla^{m-i}n\|_\infty\\
&+\|\nabla^iu\|_\infty\|\nabla^{m-i}(n-d)\|_\infty)ds\\
\leq&C\int_0^t(t-s)^{-1/2}(\|d\|_{W^{k+1,\infty}}^2+\|n\|_{W^{k+1,\infty}}^2+\|n\|_{W^{k+1,\infty}}+\|u\|_{W^{k,\infty}})\\
&\times(\|d-n\|_{W^{k+1,\infty}}+\|u-v\|_{W^{k,\infty}})ds\\
\leq&C(K+K^2)\|(u,d)-(v,n)\|_{X_T^k}\int_0^t(t-s)^{-1/2}ds\\
\leq&C(K+K^2)T^{1/2}\|(u,d)-(v,n)\|_{X_T^k}
\end{align*}
for any $1\leq m\leq k+1$, and
\begin{align*}
\|\tilde d-\tilde n\|_\infty(t)
\leq&\int_0^t\| e^{(t-s)\Delta}((\nabla d-\nabla n)(\nabla d+\nabla n)d+|\nabla n|^2(d-n)\\
&+(v-u)\nabla n+u\nabla(n-d))\|_\infty ds\\
\leq&C\int_0^t(\|\nabla(d-n)\|_\infty\|\nabla(d+n)\|_\infty\|d\|_\infty+\|\nabla n\|_\infty^2\|d-n\|_\infty\\
&+\|v-u\|_\infty\|\nabla n\|_\infty+\|u\|_\infty\|\nabla(n-d)\|_\infty)ds\\
\leq&C\int_0^t(\|d\|_{W^{1,\infty}}^2+\|n\|_{W^{1,\infty}}^2+\|n\|_{W^{1,\infty}}+\|u\|_\infty)\\
&\times(\|u-v\|_\infty+\|n-d\|_\infty)ds\\
\leq&C(K+K^2)T\|(u,d)-(v,n)\|_{X_T^k}.
\end{align*}
Hence, we have
$$
\|(\tilde u, \tilde d)-(\tilde v, \tilde n)\|_{X_T^k}\leq C(K+K^2)(T^{1/2}+T)\|(u, d)-(v, n)\|_{X_T^k},
$$
proving the conclusion.
\end{proof}

\begin{lemma}\label{lem2.4}
$\mathcal S$ maps $\mathcal B_{K_*}$ into itself, and
$$
\|\mathcal S(u, d)-\mathcal S(v, n)\|_{X_{T_*}^k}\leq\frac{1}{2}\|(u, d)-(v, n)\|_{X_{T_*}^k}
$$
for any $(u, d), (v, n)\in\mathcal B_{K_*}$, where $T_*$ and $K_*$ are given by
$$
K_*=2C_*(\|u_0\|_{W^{k,\infty}}+\|d_0\|_{W^{k+1,\infty}})
$$
and
$$
T_*=\min\left\{\frac{1}{4C_*(K_*+K_*^2)}, \frac{1}{16C_*^2(K_*+K_*^2)^2}\right\}
$$
for some positive constant $C_*$ depending only on $n$ and $k$.
\end{lemma}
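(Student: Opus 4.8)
The plan is to deduce both assertions directly from Lemmas \ref{lem2.2} and \ref{lem2.3} by substituting the prescribed values of $K_*$ and $T_*$; no new analytic input is required, so the work is purely the bookkeeping of constants. First I would fix $C_*$ to be the larger of the two constants $C$ appearing in Lemmas \ref{lem2.2} and \ref{lem2.3} (each depends only on $n$ and $k$), so that a single constant governs both estimates. The key arithmetic observation, which I would record at the outset, is that the second entry in the definition of $T_*$ is exactly the square of the first, namely $\frac{1}{16C_*^2(K_*+K_*^2)^2}=\big(\frac{1}{4C_*(K_*+K_*^2)}\big)^2$. Consequently the choice $T_*=\min\{\cdots\}$ forces simultaneously $T_*\le\frac{1}{4C_*(K_*+K_*^2)}$ and $T_*^{1/2}\le\frac{1}{4C_*(K_*+K_*^2)}$, whence
\[
T_*^{1/2}+T_*\le\frac{1}{2C_*(K_*+K_*^2)}.
\]

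For the self-mapping property, I would take $(u,d)\in\mathcal B_{K_*}$ and apply Lemma \ref{lem2.2}. Since $C_*$ dominates the constant there and $K_*=2C_*(\|u_0\|_{W^{k,\infty}}+\|d_0\|_{W^{k+1,\infty}})$, the linear term $C(\|u_0\|_{W^{k,\infty}}+\|d_0\|_{W^{k+1,\infty}})$ is at most $\frac12 K_*$. For the nonlinear term I would use $\|(u,d)\|_{X_{T_*}^k}\le K_*$ and the factorization $K_*^2+K_*^3=K_*(K_*+K_*^2)$, so that the displayed bound on $T_*^{1/2}+T_*$ gives
\[
C_*(T_*^{1/2}+T_*)(K_*^2+K_*^3)\le C_*\cdot\frac{1}{2C_*(K_*+K_*^2)}\cdot K_*(K_*+K_*^2)=\frac12 K_*.
\]
Adding the two contributions yields $\|\mathcal S(u,d)\|_{X_{T_*}^k}\le K_*$, i.e. $\mathcal S(\mathcal B_{K_*})\subseteq\mathcal B_{K_*}$.

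For the contraction estimate, I would take $(u,d),(v,n)\in\mathcal B_{K_*}$ and apply Lemma \ref{lem2.3} with $K=K_*$. Its prefactor $C(K_*+K_*^2)(T_*^{1/2}+T_*)$ is bounded, again via the displayed inequality, by $C_*(K_*+K_*^2)\cdot\frac{1}{2C_*(K_*+K_*^2)}=\frac12$, which gives exactly $\|\mathcal S(u,d)-\mathcal S(v,n)\|_{X_{T_*}^k}\le\frac12\|(u,d)-(v,n)\|_{X_{T_*}^k}$, as claimed.

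I do not expect any genuine obstacle here: all the analysis has been front-loaded into Lemmas \ref{lem2.2} and \ref{lem2.3}, and the statement of Lemma \ref{lem2.4} is just the calibration of $K_*$ and $T_*$ that turns those estimates into a self-map and a strict contraction. The only point deserving a moment's care is recognizing that the two quantities defining $T_*$ are chosen precisely so that both $T_*$ and $T_*^{1/2}$ are dominated by $\frac{1}{4C_*(K_*+K_*^2)}$; once that identity is noted, both conclusions fall out immediately. This lemma then furnishes exactly the hypotheses of the Banach fixed point theorem, which in the subsequent step produces the unique fixed point of $\mathcal S$ in $\mathcal B_{K_*}$ and hence the mild solution asserted in Proposition \ref{prop2.1}.
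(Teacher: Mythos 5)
Your proposal is correct and follows essentially the same route as the paper: both deduce the lemma by combining Lemmas \ref{lem2.2} and \ref{lem2.3} with the observation that the definition of $T_*$ forces $C_*(T_*^{1/2}+T_*)(K_*+K_*^2)\le\frac12$ while $K_*$ makes the linear term at most $\frac{K_*}{2}$. Your explicit remark that the second entry in the min is the square of the first is a slightly cleaner way to verify the paper's "one can easily check" step, but the argument is the same.
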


\begin{proof}
By Lemma \ref{lem2.2} and Lemma \ref{lem2.3}, there is a positive constant $C_*$ depending only on $n$ and $k$, such that
$$
\|\mathcal S(u, d)\|_{X_T^k}\leq C_*(\|u_0\|_{W^{k,\infty}}+\|d_0\|_{W^{k+1,\infty}})+C_*(T^{1/2}+T)(K^2+K^3),
$$
and
$$
\|\mathcal S(u, d)-\mathcal S(v, n)\|_{X_T^k}\leq C_*(T^{1/2}+T)(K+K^2)\|(u, d)-(v, n)\|_{X_T^k}
$$
for any $(u, d), (v, n)\in\mathcal B_K$, where $C_*$ is a positive constant depending only on $n$ and $k$. Set
$$
K_*=2C_*(\|u_0\|_\infty+\|d_0\|_{W^{k+1,\infty}})
$$
and
$$
T_*=\min\left\{\frac{1}{4C_*(K_*+K_*^2)}, \frac{1}{16C_*^2(K_*+K_*^2)^2}\right\}.
$$
Then one can easily check that
$$
C_*(\|u_0\|_{W^{k,\infty}}+\|d_0\|_{W^{k+1,\infty}})\leq\frac{K_*}{2}\quad\mbox{and}\quad C_*(T_*^{1/2}+T_*)(K_*+K_*^2)\leq\frac{1}{2},
$$
and thus
$$
\|\mathcal S(u, d)\|_{X_{T_*}^k}\leq C_*(\|u_0\|_{W^{k,\infty}}+\|d_0\|_{W^{k+1,\infty}})+C_*(T_*^{1/2}+T_*)(K_*^2+K_*^3)\leq\frac{K_*}{2}+\frac{K_*}{2}\leq K_*,
$$
and
\begin{align*}
\|\mathcal S(u, d)-\mathcal S(v, n)\|_{X_{T_*}^k}\leq& C_*(T_*^{1/2}+T_*)(K_*+K_*^2)\|(u, d)-(v, n)\|_{X_{T_*}^k}\\
\leq&\frac{1}{2}\|(u, d)-(v, n)\|_{X_{T_*}^k}
\end{align*}
for any $(u, d), (v, n)\in\mathcal B_{K_*}$. The proof is complete.
\end{proof}

Now, we can give the proof of Proposition \ref{prop2.1} as follows.

\textbf{Proof of Proposition \ref{prop2.1}} Let $T_*$ and $K_*$ be the constant stated in Lemma \ref{lem2.4}. By Lemma \ref{lem2.4}, $\mathcal S$ is a contradictive map from $\mathcal B_{K_*}$ to itself. By Banach's fixed point theorem, there is a unique fixed point in $\mathcal B_{K_*}$ to $\mathcal S(u, d)$, that is, there is a unique solution $(u, d)$ in $\mathcal B_{K_*}$ to equation
$$
\mathcal S(u, d)=(u, d).
$$
By definition of operator $\mathcal S$, such $(u, d)$ is a mild solution to the system (\ref{1.1})--(\ref{1.4}) in $\mathbb R^n\times [0, T_*]$. Note that
$$
d(t)=e^{t\Delta}d_0+\int_0^te^{(t-s)\Delta}(|\nabla d|^2d-(u\cdot\nabla)d)ds.
$$
Calculate directly, there holds
\begin{align*}
\partial_td=&\Delta e^{t\Delta }d_0+\int_0^t\Delta e^{(t-s)\Delta}(|\nabla d|^2d-(u\cdot\nabla)d)ds+|\nabla d|^2d-(u\cdot\nabla)d\\
=&\Delta\left(e^{t\Delta}d_0+\int_0^te^{(t-s)\Delta}(|\nabla d|^2d-(u\cdot\nabla)d)ds\right)+|\nabla d|^2d-(u\cdot\nabla)d\\
=&\Delta d+|\nabla d|^2d-(u\cdot\nabla)d,
\end{align*}
or equivalently
$$
d_t-\Delta d-|\nabla d|^2d+(u\cdot\nabla)d=0.
$$
As a consequence, the scalar function $l(x,t)=|d|^2(x,t)-1$ satisfies
$$
\left\{
\begin{array}{l}
\partial_t l-\Delta l-2|\nabla d|^2l+u\nabla l=0,\\
l(x,0)=|d_0(x)|^2-1\leq 0.
\end{array}
\right.
$$
Maximum principle of parabolic equations implies that $l\leq0$, which gives $|d|\leq 1$.

Now we show that
$$
\lim_{t\rightarrow 0^+}(\|u(t)\|_\infty+\|\nabla d(t)\|_\infty)=\|u_0\|_\infty+\|\nabla d_0\|_\infty.
$$
Since $(u, d)$ is a mild solution, we have
\begin{eqnarray*}
  u(t) &=& e^{t\Delta }u_0-\int_0^te^{(t-s)\Delta}\mathbb P\textmd{div}(u\otimes u+\nabla d\odot\nabla d)ds, \\
  d(t) &=& e^{t\Delta }d_0+\int_0^te^{(t-s)\Delta}(|\nabla d|^2d-(u\cdot\nabla)d)ds.
\end{eqnarray*}
Hence, it follows from Lemma \ref{lem2.1} that
\begin{align*}
\|u(t)\|_\infty\leq&\|e^{t\Delta }u_0\|_\infty+\int_0^t\|e^{(t-s)\Delta}\mathbb P\textmd{div}(u\otimes u+\nabla d\odot\nabla d)\|_\infty ds\\
\leq&\|u_0\|_\infty+C\int_0^t(t-s)^{-1/2}(\|u\|_\infty^2+\|\nabla d\|_\infty^2)ds\leq\|u_0\|_\infty+Ct^{1/2},
\end{align*}
and
\begin{align*}
\|\nabla d(t)\|_\infty\leq&\|e^{t\Delta }\nabla d_0\|_\infty+\int_0^t\|\nabla e^{(t-s)\Delta}(|\nabla d|^2d-(u\cdot\nabla)d)ds\\
\leq&\|\nabla d_0\|_\infty+C\int_0^t(t-s)^{-1/2}(\|\nabla d\|_\infty^2+\|u\|_\infty\|\nabla d\|_\infty)ds\\
\leq&\|\nabla d_0\|_\infty+Ct^{1/2}
\end{align*}
for any $0<t\leq T_*$. Combining the above inequalities, we conclude
\begin{equation}\label{2.1}
\limsup_{t\rightarrow 0^+}\|u(t)\|_\infty\leq\|u_0\|_\infty\quad\mbox{and}\quad\limsup_{t\rightarrow 0^+}\|\nabla d(t)\|_\infty\leq\|\nabla d_0\|_\infty.
\end{equation}

We claim that $(u_0(t), \nabla d(t))\rightarrow (u_0, \nabla d_0)$ as $t\rightarrow 0^+$ weak star in $L^\infty$. For this aim, take arbitrary $\varphi\in C_0^\infty$, and then
\begin{align}
&\int_{\mathbb R^n}u(x,t)\varphi(x)dx\nonumber\\
=&\int_{\mathbb R^n}e^{t\Delta}u_0\varphi(x)dx-\int_{\mathbb R^n}\left(\int_0^te^{(t-s)\Delta}\mathbb P\textmd{div}(u\otimes u+\nabla d\odot\nabla d) ds\right)\varphi(x)dx\nonumber\\
=&\int_{\mathbb R^n}u_0e^{t\Delta}\varphi(x)dx-\int_{\mathbb R^n}\left(\int_0^te^{(t-s)\Delta}\mathbb P\textmd{div}(u\otimes u+\nabla d\odot\nabla d) ds\right)\varphi(x)dx,\label{2.2}
\end{align}
and
\begin{align}
&\int_{\mathbb R^n}\partial_id(x,t)\varphi(x)dx=-\int_{\mathbb R^n}d(x, t)\partial_i\varphi(x)dx\nonumber\\
=&-\int_{\mathbb R^n}e^{t\Delta}d_0\partial_i\varphi(x)dx-\int_{\mathbb R^n}\left(\int_0^te^{(t-s)\Delta}(|\nabla d|^2d-(u\cdot\nabla)d)ds\right)\partial_i\varphi dx.\label{2.3}
\end{align}
By Lemma \ref{lem2.1}, it follows
\begin{align}
&\left|\int_{\mathbb R^n}\left(\int_0^te^{(t-s)\Delta}\mathbb P\textmd{div}(u\otimes u+\nabla d\odot\nabla d) ds\right)\varphi(x)dx\right|\nonumber\\
\leq&\int_{\mathbb R^n}\int_0^t\|e^{(t-s)\Delta}\mathbb P\textmd{div}(u\otimes u+\nabla d\odot\nabla d)\|_\infty ds|\varphi(x)|dx\nonumber\\
\leq&C\int_{\mathbb R^n}\int_0^t(t-s)^{-1/2}(\|u\|_\infty^2+\|\nabla d\|_\infty^2)ds|\varphi(x)|dx\nonumber\\
\leq&C\int_{\mathbb R^n}t^{1/2}|\varphi(x)|dx\rightarrow 0,\qquad \mbox{as }t\rightarrow 0^+,\label{2.4}
\end{align}
and
\begin{align}
&\left|\int_{\mathbb R^n}\left(\int_0^te^{(t-s)\Delta}(|\nabla d|^2d-(u\cdot\nabla)d)ds\right)\partial_i\varphi(x)dx\right|\nonumber\\
\leq&\int_{\mathbb R^n}\int_0^t\|e^{(t-s)\Delta}(|\nabla d|^2d-(u\cdot\nabla)d)\|_\infty ds|\partial_i\varphi(x)|dx\nonumber\\
\leq&C\int_{\mathbb R^n}\int_0^t(\|\nabla d\|_\infty^2+\|u\|_\infty\|\nabla d\|_\infty)ds|\partial_i\varphi(x)|dx\nonumber\\
\leq&C\int_{\mathbb R^n}|\partial_i\varphi(x)|tdx\rightarrow 0,\qquad \mbox{as }t\rightarrow 0^+.\label{2.5}
\end{align}
Using Lemma \ref{lem2.1} again, it has
\begin{align*}
&\|e^{t\Delta}\varphi-\varphi\|_1=\left\|\int_0^t e^{s\Delta}\Delta \varphi ds\right\|_1\\
\leq&\int_0^t\|e^{s\Delta}\Delta \varphi\|_1 ds\leq\int_0^t\|\Delta\varphi\|_1 ds\\
=&\|\Delta\varphi\|_1 t\rightarrow 0,\qquad\mbox{ as }t\rightarrow 0^+,
\end{align*}
and
$$
\|e^{t\Delta}\partial_i\varphi-\partial_i\varphi\|_1\leq\|\partial_i\Delta\varphi\|_1 t\rightarrow 0,\qquad\mbox{as }t\rightarrow0^+.
$$
Consequently, we have
\begin{equation}
  \int_{\mathbb R^n}u_0e^{t\Delta}\varphi dx\rightarrow\int_{\mathbb R^n}u_0\varphi dx,\qquad \mbox{as }t\rightarrow 0^+, \label{2.6}
\end{equation}
and
\begin{equation}
  \int_{\mathbb R^n}e^{t\Delta}d_0\partial_i\varphi dx=\int_{\mathbb R^n}d_0e^{t\Delta}\partial_i\varphi dx\rightarrow\int_{\mathbb R^n}d_0\partial_i\varphi dx,\quad \mbox{as }t\rightarrow0^+.\label{2.7}
\end{equation}
Combining (\ref{2.2})--(\ref{2.7}), it follows
\begin{equation*}
\lim_{t\rightarrow0^+}\int_{\mathbb R^n}u(x,t)\varphi(x)dx=\int_{\mathbb R^n}u_0\varphi(x)dx,
\end{equation*}
and
\begin{equation*}
\lim_{t\rightarrow0^+}\int_{\mathbb R^n}\partial_id(x,t)\varphi(x)dx=\int_{\mathbb R^n}\partial_id_0\varphi(x)dx
\end{equation*}
for any $\varphi\in C_0^\infty$. Recalling that $(u,\nabla d)\in L^\infty(0, T_*;L^\infty)$ and $C_0^\infty$ is dense in $L^1$, we conclude that
$$
u(t)\rightarrow u_0\qquad\mbox{and }\qquad\nabla d(t)\rightarrow\nabla d_0,\quad \mbox{ as }t\rightarrow0^+,
$$
weak star in $L^\infty$. Hence we have
$$
\|u_0\|_\infty\leq\liminf_{t\rightarrow0^+}\|u(t)\|_\infty\qquad\mbox{ and }\qquad\|\nabla d_0\|_\infty\leq\liminf_{t\rightarrow 0^+}\|\nabla d(t)\|_\infty.
$$
Combining this with (\ref{2.1}), we conclude
$$
\lim_{t\rightarrow 0^+}\|u(t)\|_\infty\leq\|u_0\|_\infty\quad\mbox{and}\quad\lim_{t\rightarrow 0^+}\|\nabla d(t)\|_\infty\leq\|\nabla d_0\|_\infty.
$$
The proof is complete.

\section{A priori estimates for regular solutions}\label{sec3}

In this section we study the a priori estimates on higher derivatives of the regular mild solutions in terms of the lower ones, that is the following proposition.

\begin{proposition}\label{prop3.1}
Let $k\geq 2$ and $(u, d)\in X_T^k$ be a mild solution to system (\ref{1.1})--(\ref{1.4}), where $X_T^k$ is the Banach space stated in Section \ref{sec2}. Suppose that
$$
\|u\|_{L^\infty(0,T; L^\infty)}+\|\nabla d\|_{L^\infty(0, T; L^\infty)}\leq M
$$
for some positive constant $M$. Then  we have the following estimates
\begin{eqnarray*}
&\|u(t)-u(t_0)\|_{W^{k-1,\infty}}\leq C(|t-t_0|+|t-t_0|^{\frac{1}{2k}})\\
&\|(u, d)\|_{L^{\infty}(\delta, T; W^{k,\infty}\times W^{k+1,\infty})}\leq C\\
&\|d(t)-d(t_0)\|_{W^{k,\infty}}\leq C(|t-t_0|+|t-t_0|^{\frac{1}{k+1}})
\end{eqnarray*}
for any $\delta\leq t_0\leq t\leq T$ and $0<\delta<T$, where $C$ is a positive constant depending only on $n$, $k$, $\delta$ and $M$.
\end{proposition}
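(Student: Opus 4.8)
The plan is to derive the middle (spatial) estimate first, since the two time-modulus estimates then follow from it by Duhamel's formula and Gagliardo--Nirenberg interpolation. Throughout I would use only the smoothing bounds of Lemma \ref{lem2.1}, the hypothesis $\|u\|_{L^\infty(0,T;L^\infty)}+\|\nabla d\|_{L^\infty(0,T;L^\infty)}\le M$, and the elementary bound $\|d(t)\|_\infty\le C$ (which follows either from $|d|\le 1$ or from a Gronwall estimate on the mild formula for $d$). Since $(u,d)\in X_T^k$, every quantity below is a priori finite; the content is to bound it by a constant depending only on $n,k,\delta,M$.

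For the spatial estimate I would argue by induction on the order of differentiation, establishing at the $j$-th stage the weighted bounds $\sup_{0<s\le T}s^{j/2}\|\nabla^j u(s)\|_\infty\le C_j$ and $\sup_{0<s\le T}s^{j/2}\|\nabla^{j+1}d(s)\|_\infty\le C_j$ with $C_j$ depending only on $n,k,M,T$; evaluating at $t\ge\delta$ then gives the claimed bound with its $\delta$-dependence. The base case $j=0$ is exactly the hypothesis together with $\|d\|_\infty\le C$. For the inductive step I would represent $u(t)$ and $d(t)$ by Duhamel's formula restarted from $t/2$, apply $\nabla^j$ (resp.\ $\nabla^{j+1}$), and distribute derivatives by Leibniz: in $\nabla^j\mathbb P\textmd{div}(u\otimes u+\nabla d\odot\nabla d)$ I would keep the single derivative of $\mathbb P\textmd{div}$ on the semigroup, producing the integrable factor $(t-s)^{-1/2}$, and place the remaining $j$ derivatives on the product; likewise for $d$ I would write $\nabla^{j+1}e^{(t-s)\Delta}(\cdots)=\nabla e^{(t-s)\Delta}\nabla^j(\cdots)$, again producing $(t-s)^{-1/2}$. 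Every resulting term except the two of top order is controlled by the induction hypothesis (using $s\ge t/2$ to turn the weights $s^{-i/2}$ into harmless constants on $[t/2,t]$), while the two top-order terms reproduce $\nabla^j u$ and $\nabla^{j+1}d$ themselves.

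The main difficulty, and the heart of the proof, is to close this top-order step, since the top-order contributions couple $\|\nabla^j u\|_\infty$ and $\|\nabla^{j+1}d\|_\infty$ through a weakly singular Volterra inequality of the form $\psi(t)\le (\text{known})+CM\int(t-s)^{-1/2}\psi(s)\,ds$ with $\psi=\|\nabla^j u\|_\infty+\|\nabla^{j+1}d\|_\infty$. Because the kernel $(t-s)^{-1/2}$ is integrable, I would close it in two regimes: on a short initial interval $(0,t_*]$ with $t_*\sim(CM)^{-2}$ the factor $CM\,t^{1/2}$ is $\le\tfrac12$, which lets me absorb the top-order term into the weighted supremum; on the remaining bounded interval $[t_*,T]$, where times are bounded away from zero, I would invoke the generalized (weakly singular) Gronwall lemma, which needs no smallness. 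This yields the weighted bound at stage $j$ and completes the induction, hence the middle estimate.

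Finally, for the two time-modulus estimates I would use the spatial bound just proved. Writing $u(t)-u(t_0)=(e^{(t-t_0)\Delta}-I)u(t_0)-\int_{t_0}^t e^{(t-s)\Delta}\mathbb P\textmd{div}(u\otimes u+\nabla d\odot\nabla d)\,ds$, the first term is Lipschitz, $\le C(t-t_0)\|\Delta u(t_0)\|_\infty$, while the Duhamel integral is only $\le C(t-t_0)^{1/2}$ because of the $(t-s)^{-1/2}$ from $\mathbb P\textmd{div}$; hence $\|u(t)-u(t_0)\|_\infty\le C(t-t_0)^{1/2}$. Interpolating via $\|f\|_{W^{k-1,\infty}}\le C\|f\|_\infty^{1/k}\|f\|_{W^{k,\infty}}^{(k-1)/k}$ against the uniform bound $\|u(t)-u(t_0)\|_{W^{k,\infty}}\le C$ produces the leading term $C(t-t_0)^{1/(2k)}$, and the remaining $+\,(t-t_0)$ collects the Lipschitz-in-time contribution, matching the stated bound. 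For $d$ the analogous computation gives $\|d(t)-d(t_0)\|_\infty\le C(t-t_0)$, since there is no $\mathbb P\textmd{div}$ and the Duhamel integrand $|\nabla d|^2d-(u\cdot\nabla)d$ is merely bounded in $L^\infty$, and interpolation $\|f\|_{W^{k,\infty}}\le C\|f\|_\infty^{1/(k+1)}\|f\|_{W^{k+1,\infty}}^{k/(k+1)}$ then yields $C(t-t_0)^{1/(k+1)}$, as claimed. I expect the top-order coupled Volterra estimate of the inductive step to be the only genuinely delicate point; the rest is bookkeeping with Lemma \ref{lem2.1} and standard interpolation.
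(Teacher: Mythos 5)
Your proposal follows essentially the same route as the paper: induction on the order of differentiation, Duhamel restarted away from $t=0$ with the single derivative of $\mathbb P\,\mathrm{div}$ (resp.\ one gradient) kept on the semigroup to produce the integrable kernel $(t-s)^{-1/2}$, absorption of the coupled top-order terms $\|\nabla^{j}u\|_\infty+\|\nabla^{j+1}d\|_\infty$ via a weighted supremum on a short time window, and finally the time moduli from $\|u(t)-u(t_0)\|_\infty\le C(|t-t_0|+|t-t_0|^{1/2})$, $\|d(t)-d(t_0)\|_\infty\le C|t-t_0|$ combined with Gagliardo--Nirenberg interpolation against the uniform $W^{k,\infty}\times W^{k+1,\infty}$ bound. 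The one substantive divergence is your treatment of the regime $t\ge t_*$: invoking a weakly singular Gronwall lemma over all of $[t_*,T]$ does close the estimate, but it yields a constant growing (exponentially) with $T$ --- and indeed you list $T$ among the dependencies of $C_j$, whereas the proposition asserts dependence on $n,k,\delta,M$ only. The paper instead restarts Duhamel from a moving base point $t_0\in[\tfrac{\delta}{2},T-\sigma]$ with a \emph{fixed} window length $\sigma=\sigma(n,k,\delta,M)$ chosen so that $C\sigma^{1/2}\le\tfrac12$; the absorption argument then applies on every window, and $[\delta,T]$ is covered by the translates $[t_0+\tfrac{\sigma}{2},t_0+\sigma]$ with one and the same constant. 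This $T$-uniformity is not cosmetic: Proposition \ref{prop3.1} is later applied to the rescaled solutions $(u_k,d_k)$ on intervals $[-M_k^2/2,0]$ of unbounded length in the blow-up argument for Theorem \ref{thm1.2}, where a $T$-dependent constant would be useless. Your argument is easily repaired by replacing the global singular Gronwall step with the fixed-window restart (restart from $t-\tau$ with $\tau\sim t_*$ rather than from $t/2$ once $t\ge t_*$); everything else in your sketch, including the interpolation step producing the exponents $\tfrac{1}{2k}$ and $\tfrac{1}{k+1}$, matches the paper's proof.
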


\begin{proof}
Since $(u, d)$ is a mild solution to (\ref{1.1})--(\ref{1.4}) in $\mathbb R^n\times[0, T]$, it follows
\begin{eqnarray*}
  u(t) &=& e^{(t-t_0)\Delta }u(t_0)-\int_{t_0}^te^{(t-s)\Delta}\mathbb P\textmd{div}(u\otimes u+\nabla d\odot\nabla d)ds, \\
  d(t) &=& e^{(t-t_0)\Delta }d(t_0)+\int_{t_0}^te^{(t-s)\Delta}(|\nabla d|^2d-(u\cdot\nabla)d)ds
\end{eqnarray*}
for any $0\leq t_0\leq t\leq T$.

We use mathematical inductive argument on $l$ to prove that
$$
\|(u, \nabla d)\|_{L^\infty(\delta, T; W^{l,\infty})}\leq C(\delta, M, n)\quad 0\leq l\leq k
$$
for any $0<\delta<T$. For $l=0$, the estimates follow trivially from our assumption $\|u\|_{L^\infty(0,T; L^\infty)}+\|\nabla d\|_{L^\infty(0, T; L^\infty)}\leq M$. Suppose that the estimates hold true for any $0\leq l\leq l_0$ and any $0<\delta<T$. Then for any $t_0\geq\frac{\delta}{2}$, we have
\allowdisplaybreaks\begin{align*}
&\|\nabla^{l_0+1}u\|_\infty(t)\\
\leq&\|\nabla e^{(t-t_0)\Delta}\nabla^{l_0}u(t_0)\|_\infty+\int_{t_0}^t\|e^{(t-s)\Delta}\mathbb P\textmd{div}\nabla^{l_0+1}(u\otimes u+\nabla d\odot\nabla d)\|_\infty ds\\
\leq&C\int_{t_0}^t(t-s)^{-1/2}\sum_{i=0}^{l_0+1}(\|\nabla^iu\|_\infty\|\nabla^{l_0+1-i}u\|_\infty
+\|\nabla^{i+1}d\|_\infty\|\nabla^{l_0+2-i}d\|_\infty)ds\\
&+C(t-t_0)^{-1/2}\|\nabla^{l_0}u(t_0)\|_\infty\\
\leq&C\int_{t_0}^t(t-s)^{-1/2}(\|u\|_{W^{l_0,\infty}}^2+\|\nabla d\|_{W^{l_0+1,\infty}}^2+\|u\|_\infty\|\nabla^{l_0+1}u\|_\infty\\
&+\|\nabla d\|_\infty\|\nabla^{l_0+2}d\|_\infty)ds+C(t-t_0)^{-1/2}\|\nabla^{l_0}u(t_0)\|_\infty\\
\leq&C(t-t_0)^{-1/2}+C\int_{t_0}^t(t-s)^{-1/2}(1+\|\nabla^{l_0+1}u\|_\infty+\|\nabla^{l_0+2}d\|_\infty)ds\\
=&C[(t-t_0)^{-1/2}+(t-t_0)^{1/2}]+C\int_{t_0}^t(t-s)^{-1/2}(\|\nabla^{l_0+1}u\|_\infty+\|\nabla^{l_0+2}d\|_\infty)ds,
\end{align*}
and
\begin{align*}
&\|\nabla^{l_0+2}d\|_\infty(t)\\
\leq&\|\nabla e^{(t-t_0)\Delta}\nabla^{l_0+1}d(t_0)\|_\infty+\int_{t_0}^t\|\nabla e^{(t-s)\Delta}\nabla^{l_0+1}(|\nabla d|^2d-(u\cdot\nabla)d)\|_\infty ds\\
\leq&C(t-t_0)^{-1/2}\|\nabla^{l_0+1}d(t_0)\|_\infty+C\int_{t_0}^t(t-s)^{-1/2}\sum_{{0\leq i,j\leq l_0+1}\atop{i+j\leq l_0+1}}(\|\nabla^iu\|_\infty\|\nabla^{l_0+2-i}d\|_\infty\\
&+\|\nabla^{i+1}d\|_\infty\|\nabla^{j+1}d\|_\infty\|\nabla^{l_0+1-i-j}d\|_\infty)ds\\
\leq&C(t-t_0)^{-1/2}+C\int_{t_0}^t(t-s)^{-1/2}(\|d\|_{W^{l_0+1,\infty}}^3+\|\nabla d\|_\infty\|\nabla^{l_0+2}d\|_\infty\\
&+\|u\|_{W^{l_0,\infty}}\|d\|_{W^{l_0+1,\infty}}+\|u\|_\infty\|\nabla^{l_0+2}d\|_\infty+\|\nabla^{l_0+1}u\|_\infty\|\nabla d\|_\infty)ds\\
\leq&C[(t-t_0)^{-1/2}+(t-t_0)^{1/2}]+C\int_{t_0}^t(t-s)^{-1/2}(\|\nabla^{l_0+2}d\|_\infty+\|\nabla^{l_0+1}u\|_\infty)ds
\end{align*}
for any $t_0<t\leq T$. Hence, summing the above two inequalities up, we obtain
\begin{align*}
&(\|\nabla^{l_0+1}u\|_\infty+\|\nabla^{l_0+2}d\|_\infty)(t)\\
\leq& C[(t-t_0)^{-1/2}+(t-t_0)^{1/2}]+C\int_{t_0}^t(t-s)^{-1/2}(\|\nabla^{l_0+1}u\|_\infty+\|\nabla^{l_0+2}d\|_\infty)ds
\end{align*}
for any $t_0<t\leq T$, which gives
\begin{align*}
&(t-t_0)^{1/2}(\|\nabla^{l_0+1}u\|_\infty+\|\nabla^{l_0+2}d\|_\infty)(t)\\
\leq&C(1+t-t_0)+C(t-t_0)^{1/2}\int_{t_0}^t(t-s)^{-1/2}(\|\nabla^{l_0+1}u\|_\infty+\|\nabla^{l_0+1}d\|_\infty)ds,\quad t>t_0.
\end{align*}
Setting $\phi(t)=\sup_{t_0<s\leq t}(s-t_0)^{1/2}(\|\nabla^{l_0+1}u\|_\infty+\|\nabla^{l_0+2}d\|_\infty)(s)$, then
\begin{align*}
\phi(t)\leq&C(1+t-t_0)+C(t-t_0)^{1/2}\int_{t_0}^t(t-s)^{-1/2}(s-t_0)^{-1/2}ds\phi(t)\\
=&C(1+t-t_0)+C(t-t_0)^{1/2}\phi(t)\leq C(1+t-t_0)+\frac{1}{2}\phi(t),\qquad t_0<t\leq t_0+\sigma
\end{align*}
for some positive constant $\sigma<\delta$ depending only on $\delta, n, k$ and $M$, and thus
$$
\phi(t)\leq C,\qquad t_0<t\leq t_0+\sigma.
$$
This inequality implies
$$
(\|\nabla^{l_0+1}u\|_\infty+\|\nabla^{l_0+2}d\|_\infty)(t)\leq C(t-t_0)^{-1/2},\qquad t_0<t\leq t_0+\sigma,
$$
and thus
$$
(\|\nabla^{l_0+1}u\|_\infty+\|\nabla^{l_0+2}d\|_\infty)\leq C,\qquad t_0+\frac{\sigma}{2}<t\leq t_0+\sigma.
$$
Let $t_0$ take values over interval $[\frac{\delta}{2}, T-\sigma]$, recalling that $0<\sigma\leq\delta$, we obtain
$$
\sup_{\delta\leq t\leq T}(\|\nabla^{l_0+1}u\|_\infty+\|\nabla^{l_0+2}d\|_\infty)(t)\leq C,
$$
thus the estimates hold true for $l_0+1$, and finally we obtain
$$
\|(u, d)\|_{L^{\infty}(\delta, T; W^{k,\infty}\times W^{k+1,\infty})}\leq C.
$$

We now prove the time continuity. It follows
\begin{align*}
\|u(t)-u(t_0)\|_\infty=&\left\|(e^{(t-t_0)\Delta}-1)u(t_0)-\int_{t_0}^te^{(t-s)\Delta}\mathbb P\textmd{div}(u\otimes u+\nabla d\odot\nabla d)ds\right\|_\infty\\
\leq&\|(e^{(t-t_0)\Delta}-1)u(t_0)\|_\infty+\int_{t_0}^t\|e^{(t-s)\Delta}\mathbb P\textmd{div}(u\otimes u+\nabla d\odot\nabla d)\|_\infty ds\\
=&\left\|\int_0^{t-t_0}e^{s\Delta}\Delta u(t_0)ds\right\|_\infty+\int_{t_0}^t\|e^{(t-s)\Delta}\mathbb P\textmd{div}(u\otimes u+\nabla d\odot\nabla d)\|_\infty ds\\
\leq&\int_0^{t-t_0}\|\Delta u(t_0)\|_\infty ds+C\int_{t_0}^t(\|u\|_\infty^2+\|\nabla d\|_\infty^2)ds\\
\leq&C(|t-t_0|+|t-t_0|^{1/2}),\qquad \delta\leq t_0\leq t\leq T,
\end{align*}
and
\begin{align*}
\|d(t)-d(t_0)\|_\infty=&\left\|(e^{(t-t_0)\Delta}-1)d_0+\int_{t_0}^te^{(t-s)\Delta}(|\nabla d|^2d-(u\cdot\nabla)d)ds\right\|_\infty\\
\leq&\left\|\int_0^{t-t_0}e^{s\Delta}\Delta d_0 ds\right\|_\infty+\int_{t_0}^t\|e^{(t-s)\Delta}(|\nabla d|^2d-(u\cdot\nabla)d)\|_\infty ds\\
\leq&C\|\Delta d(t_0)\|_\infty|t-t_0|+C\int_{t_0}^t(\|\nabla d\|_\infty^2+\|u\|_\infty\|\nabla d\|_\infty)ds\\
\leq&C|t-t_0|,\qquad \delta\leq t_0\leq t\leq T.
\end{align*}
By the aid of the above two inequalities, it follows from the interpolation inequality that
\begin{align*}
\|\nabla^lu(t)-\nabla^lu(t_0)\|_\infty\leq& C\|u(t)-u(t_0)\|_\infty^{1-\frac{l}{k}}\|u(t)-u(t_0)\|_{W^{k,\infty}}^{\frac{l}{k}}\\
\leq& C(|t-t_0|^{1-\frac{l}{k}}+|t-t_0|^{\frac{1}{2}-\frac{l}{2k}}),\qquad 0\leq l\leq k-1,
\end{align*}
and
\begin{align*}
\|\nabla^ld(t)-\nabla^ld(t_0)\|_\infty\leq&C\|d(t)-d(t_0)\|_\infty^{1-\frac{l}{k+1}}\|d(t)-d(t_0)\|_{W^{k+1,\infty}}^{\frac{l}{k+1}}\\
\leq&C|t-t_0|^{1-\frac{l}{k+1}},\qquad 0\leq l\leq k,
\end{align*}
and consequently, we have
\begin{align*}
\|u(t)-u(t_0)\|_{W^{k-1,\infty}}\leq&C\sum_{l=0}^{k-1}\left(|t-t_0|^{1-\frac{l}{k}}+|t-t_0|^{\frac{1}{2}-\frac{l}{2k}}\right)\\
\leq&C(|t-t_0|+|t-t_0|^{\frac{1}{k}}+|t-t_0|^{\frac{1}{2}}+|t-t_0|^{\frac{1}{2k}})\\
\leq&C(|t-t_0|+|t-t_0|^{\frac{1}{2k}}),
\end{align*}
and
\begin{align*}
\|d(t)-d(t_0)\|_{W^{k,\infty}}\leq C\sum_{l=0}^k|t-t_0|^{1-\frac{l}{k+1}}\leq C(|t-t_0|+|t-t_0|^{\frac{1}{k+1}}).
\end{align*}
 The proof is complete.
\end{proof}

\section{Estimates on the existence time in terms of $L^\infty$ norm of initial data}\label{sec4}

In this section, we estimates the lower bound of the existence time of the mild solutions with smooth initial data in terms of the $L^\infty$ norm of the initial data. That's the following Proposition.

\begin{proposition}\label{prop4.1}
Let $k\geq2$, and$(u, d)\in X_{T_0}^k$ be the solution obtained in Proposition \ref{prop2.1}, then the existence time $T_0$ can be chosen such that
$$
T_0\geq C(\|u_0\|_\infty+\|\nabla d_0\|_\infty)^{-2}
$$
for some positive constant $C$ depending only on $n$, and
$$
\|(u,\nabla d)\|_{L^\infty(0, T_0; L^\infty)}\leq 2(\|u_0\|_\infty+\|\nabla d_0\|_\infty).
$$
\end{proposition}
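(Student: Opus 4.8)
The plan is to derive a \emph{closed} a priori estimate for the single scalar quantity $M(t):=\sup_{0\le s\le t}\big(\|u(s)\|_\infty+\|\nabla d(s)\|_\infty\big)$ which involves only the $L^\infty$-type norms (and exploits $|d|\le1$), and then to combine it with a continuity/bootstrap argument to force $M(t)\le 2M_0$ on a time interval whose length is comparable to $M_0^{-2}$, where $M_0:=\|u_0\|_\infty+\|\nabla d_0\|_\infty$. The crucial point is that the existence time $T_*$ produced by Proposition \ref{prop2.1} depends on the high norms $\|u_0\|_{W^{k,\infty}}+\|d_0\|_{W^{k+1,\infty}}$; to reach a time governed purely by $M_0$ one must \emph{continue} the solution, and the mechanism allowing this is exactly the a priori bound of Proposition \ref{prop3.1}, which controls all higher derivatives in terms of the $L^\infty$ norms.

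For the a priori estimate I start from the mild formulation of Definition \ref{def1.1}. Applying the second inequality of Lemma \ref{lem2.1} to the Duhamel term of $u$ (noting that $\mathbb P\,\textmd{div}$ is a sum of operators $e^{(t-s)\Delta}\mathbb P\partial_i$) and using $\|u\otimes u\|_\infty\le\|u\|_\infty^2$, $\|\nabla d\odot\nabla d\|_\infty\le\|\nabla d\|_\infty^2$, I obtain
$$\|u(t)\|_\infty\le\|u_0\|_\infty+C\int_0^t(t-s)^{-1/2}(\|u\|_\infty^2+\|\nabla d\|_\infty^2)\,ds.$$
Differentiating the $d$-equation and applying the first inequality of Lemma \ref{lem2.1} with $|\beta|=1$, together with $|d|\le1$ (so that $\||\nabla d|^2d\|_\infty\le\|\nabla d\|_\infty^2$) and $\|(u\cdot\nabla)d\|_\infty\le\|u\|_\infty\|\nabla d\|_\infty$, gives
$$\|\nabla d(t)\|_\infty\le\|\nabla d_0\|_\infty+C\int_0^t(t-s)^{-1/2}(\|\nabla d\|_\infty^2+\|u\|_\infty\|\nabla d\|_\infty)\,ds.$$
Adding these, bounding each quadratic term by $M(s)^2\le M(t)^2$, computing $\int_0^t(t-s)^{-1/2}\,ds=2t^{1/2}$, and taking the supremum over $[0,t]$ (the right-hand side being nondecreasing in $t$), I arrive at the closed inequality $M(t)\le M_0+C\,t^{1/2}M(t)^2$.

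Next comes the bootstrap. The function $M$ is continuous and nondecreasing with $M(0)=M_0$ (continuity at $0$ from Proposition \ref{prop2.1}, and on $(0,T)$ from the time-continuity estimates of Proposition \ref{prop3.1}, which apply since $k\ge2$). Set $T_0:=\frac{1}{16C^2M_0^2}$ and let $t_*$ be the largest time, up to which the solution exists, with $M\le2M_0$ on $[0,t_*]$. If $t_*<T_0$ and $M(t_*)=2M_0$, the closed inequality yields $M(t_*)\le M_0+C\,t_*^{1/2}(2M_0)^2=M_0+4C\,t_*^{1/2}M_0^2<M_0+M_0=2M_0$, a contradiction; hence $M(t)\le2M_0$ throughout the existence interval intersected with $[0,T_0]$, which is precisely the bound $\|(u,\nabla d)\|_{L^\infty(0,T_0;L^\infty)}\le2M_0$.

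The remaining—and main—difficulty is to guarantee that the solution truly exists up to $T_0$, since Proposition \ref{prop2.1} delivers only the shorter, high-norm-dependent time $T_*$. Here I argue by continuation. On any $[0,T]$ with $T<T_0$ on which the solution exists, the bootstrap gives $M\le2M_0$, so Proposition \ref{prop3.1} provides uniform bounds on $\|u(t)\|_{W^{k,\infty}}+\|d(t)\|_{W^{k+1,\infty}}$ on $[\delta,T]$ together with the stated moduli of time continuity; these show that $(u(t),d(t))$ is Cauchy in $W^{k-1,\infty}\times W^{k,\infty}$ and uniformly bounded in $W^{k,\infty}\times W^{k+1,\infty}$ as $t\uparrow T$, hence converges to a limit $(u(T),d(T))\in W^{k,\infty}\times W^{k+1,\infty}$, with $\textmd{div}\,u(T)=0$ and $|d(T)|\le1$ preserved. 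Applying Proposition \ref{prop2.1} with this new initial datum extends the solution strictly beyond $T$. Consequently the existence time cannot fall short of $T_0$: otherwise one could continue past the supposed maximal time while the bound $M\le2M_0$ still holds, contradicting maximality. Therefore $T_0$ may be chosen with $T_0\ge C(\|u_0\|_\infty+\|\nabla d_0\|_\infty)^{-2}$, completing the argument.
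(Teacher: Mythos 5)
Your proposal is correct and follows essentially the same route as the paper: the same closed quadratic integral inequality $M(t)\le M_0+Ct^{1/2}M(t)^2$ obtained from the mild formulation and Lemma \ref{lem2.1}, the same use of continuity of $M$ (including $M(0^+)=M_0$ from Proposition \ref{prop2.1}) to conclude $M\le 2M_0$ up to a time of order $M_0^{-2}$, and the same continuation argument via the higher-norm bounds of Proposition \ref{prop3.1} followed by a fresh application of Proposition \ref{prop2.1}. The only cosmetic difference is that you resolve the quadratic inequality by a standard bootstrap contradiction, whereas the paper explicitly compares $M$ with the two roots $\underline f,\bar f$ of the quadratic; the two arguments are equivalent.
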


\begin{proof}
We first estimate the lower bound of the existence time. By Proposition \ref{prop2.1}, system (\ref{1.1})--(\ref{1.4}) has a local mild solution $(u, d)$. Let's extend such solution to the maximal existence time $T^*$. If $T^*=\infty$, then we are down. Hence, we suppose that $T^*<\infty$, and then by Proposition \ref{prop2.1}, the maximal existence time $T^*$ can be characterized as
$$
\lim_{T\rightarrow T^*}\|(u,d)\|_{X_T^k}=\infty\qquad \mbox{ and }\qquad\|(u, d)\|_{X_T^k}<\infty\quad\mbox{ for any }T<T^*.
$$
It follows from Lemma \ref{lem2.1} that
\begin{align*}
\|u(t)\|_\infty=&\left\|e^{t\Delta}u_0-\int_0^te^{(t-s)\Delta}\mathbb P\textmd{div}(u\otimes u+\nabla d\odot\nabla d)ds\right\|\\
\leq&C\|u_0\|_\infty+C\int_0^t(t-s)^{-1/2}(\|u\|_\infty^2+\|\nabla d\|_\infty^2)ds,
\end{align*}
and
\begin{align*}
\|\nabla d(t)\|_\infty=&\left\|e^{t\Delta}\nabla d_0+\int_0^t\nabla e^{(t-s)\Delta}(|\nabla d|^2d-(u\cdot\nabla)d)ds\right\|_\infty\\
\leq&C\|\nabla d_0\|_\infty+C\int_0^t(t-s)^{-1/2}(\|\nabla d\|_\infty^2+\|u\|_\infty^2)ds,
\end{align*}
and thus
$$
(\|u\|_\infty+\|\nabla d\|_\infty)(t)\leq C(\|u_0\|_\infty+\|\nabla d_0\|_\infty)+C\int_0^t(t-s)^{-1/2}(\|u\|_\infty^2+\|\nabla d\|_\infty^2)ds
$$
for any $0\leq t<T^*$. Setting $f(t)=\sup_{0\leq s\leq t}(\|u\|_\infty+\|\nabla d\|_\infty)(s)$, then there holds
\begin{equation}\label{4.0}
f(t)\leq C_*(\|u_0\|_\infty+\|\nabla d_0\|_\infty)+C_*t^{1/2}f(t)^2
\end{equation}
for any $0<t<T^*$, where $C_*$ is a positive constant depending only on $n$.

We claim that $T^*>T_0$, where $T_0$ is given by
\begin{equation*}
T_0=\left(\frac{1}{4C_*(\|u_0\|_\infty+\|\nabla d_0\|_\infty)}\right)^2.
\end{equation*}
Suppose that $T^*\leq T_0$, then the inequality (\ref{4.0}) is equivalent to
$$
f(t)\geq \bar f(t)\qquad \mbox{or}\qquad f(t)\leq \underline f(t)
$$
where
$$
\bar f(t)=\frac{1+\sqrt{1-4C_*t^{1/2}(\|u_0\|_\infty+\|\nabla d_0\|_\infty)}}{2C_*t^{1/2}}
$$
and
$$
\underline f(t)=\frac{1-\sqrt{1-4C_*t^{1/2}(\|u_0\|_\infty+\|\nabla d_0\|_\infty)}}{2C_*t^{1/2}}.
$$
By Proposition \ref{prop2.1} and Proposition \ref{prop3.1}, it follows that $f(t)$ is bounded and continuous on $(0, T]$ for any $0<T<T^*$, and
$$
\lim_{t\rightarrow0^+}(\|u\|_\infty+\|\nabla d\|_\infty)(t)=\|u_0\|_\infty+\|\nabla d_0\|_\infty.
$$
One can easily check that
$$
\bar f(t)>\underline f(t), \quad \lim_{t\rightarrow 0^+}\bar f(t)=\infty\quad\mbox{and}\quad\lim_{t\rightarrow 0^+}\underline f(t)=\|u_0\|_\infty+\|\nabla d_0\|_\infty.
$$
These facts force $f(t)$ to satisfies $f(t)\leq\underline f(t)$, and thus, noticing that $\underline f(t)$ is increasing in $(0, T_0]$, we have
$$
f(T)\leq\underline f(T)\leq \underline f(T_0)=2(\|u_0\|_\infty+\|\nabla d_0\|_\infty),
$$
which gives
\begin{equation*}
\|(u,\nabla d)\|_{L^\infty(0, T; L^\infty)}\leq 2(\|u_0\|_\infty+\|\nabla d_0\|_\infty),\quad\forall 0\leq T<T^*.
\end{equation*}
Now we can apply Proposition \ref{prop3.1} to deduce
$$
\|(u, \nabla d)\|_{L^\infty(\delta, T; W^{k,\infty})}\leq C\quad\mbox{and}\quad\|(u,\nabla d)\|_{C^\alpha([\delta, T]; W^{k-1,\infty})}\leq C
$$
with $C$ independent of $T$. On account of this inequality, by taking $T\rightarrow T^*$, one can extend $(u, d)$ continuously to be defined on $[0,T^*]$, such that $$
\|(u,\nabla d)\|_{L^\infty(\delta, T^*; W^{k,\infty})}\leq C\quad\mbox{and}\quad\|(u,\nabla d)\|_{C^\alpha([\delta, T^*]; W^{k-1,\infty})}\leq C.
$$
By Proposition \ref{prop2.1}, we can extend $(u, d)$ to be a mild solution on $[0, T^{**}]$ for some $T^{**}>T^*$, which contradicts to the definition of $T^*$. This contradiction provides us that $T^*>T_0$, and $(u, d)$ is a mild solution to system (\ref{1.1})--(\ref{1.4}) on $[0, T_0]$.

Now we prove that the mild solution $(u, d)$ defined on $[0, T_0]$ satisfies
$$
\|(u, \nabla d)\|_{L^\infty(0, T_0; L^\infty)}\leq 2(\|u_0\|_\infty+\|\nabla d_0\|_\infty).
$$
In fact, starting from (\ref{4.0}), we can use the same procedure as in the above paragraph to obtain such estimate. Thus we omit its proof here.
\end{proof}

\section{Well-posedness and regularity with $L^\infty$ initial data}\label{sec5}

In this section, we prove the local existence and uniqueness of mild solutions to the system (\ref{1.1})--(\ref{1.4}) with $L^\infty$ initial data, and we also prove the regularity of such mild solutions, in other words, we prove Theorem \ref{thm1.1}.

\textbf{Proof of Theorem \ref{thm1.1}.} We first prove the local existence. Take $(u_0^\varepsilon, d_0^\varepsilon)$ such that
\begin{eqnarray*}
&u_0^\varepsilon\in W^{k,\infty},\quad d_0^\varepsilon\in W^{k+1,\infty},\quad\|u_0^\varepsilon\|_\infty\leq\|u_0\|_\infty,\quad\|d_0^\varepsilon\|_\infty\leq\|d_0\|_\infty,\quad\|\nabla d_0^\varepsilon\|_\infty\leq\|\nabla d_0\|_\infty,\\
&(u_0^\varepsilon(x), d_0^\varepsilon(x), \nabla d_0^\varepsilon(x))\rightarrow(u_0(x), d_0(x), \nabla d_0(x))\quad\mbox{as }\varepsilon\rightarrow0, \mbox{a.e.}.
\end{eqnarray*}

By Proposition \ref{prop4.1}, for any $\varepsilon$, there is a mild solution $(u^\varepsilon, d^\varepsilon)$ to system (\ref{1.1})--(\ref{1.4}) with initial data $(u_0^\varepsilon, d_0^\varepsilon)$ on $[0, T_\varepsilon]$ with $T_\varepsilon\geq C_*(\|u_0^\varepsilon\|_\infty+\|\nabla d_0^\varepsilon\|_\infty)^{-2}\geq C_*(\|u_0\|_\infty+\|\nabla d_0\|_\infty)^{-2}$, such that
\begin{equation}\label{5.0-1}
\|(u^\varepsilon, \nabla d^\varepsilon)\|_{L^\infty(0, T_\varepsilon; L^\infty)}\leq 2(\|\nabla d_0\|_\infty+\|u_0\|_\infty).
\end{equation}
Without loss of generality, we can suppose that all these $(u^\varepsilon, d^\varepsilon)$ are defined on a common time interval $[0,T]$ with a certain
\begin{equation}\label{5.0}
T\geq C_*(\|\nabla d_0\|_\infty+\|u_0\|_\infty)^{-2}.
\end{equation}

By Proposition \ref{prop3.1}, it follows from (\ref{5.0-1}) that
\begin{eqnarray}
& \|(u^\varepsilon, d^\varepsilon)\|_{L^\infty(\delta, T; W^{k,\infty}\times W^{k+1,\infty})}\leq C,\qquad \|(u^\varepsilon, d^\varepsilon)\|_{C^\alpha([\delta, T]; W^{k-1,\infty}\times W^{k,\infty})}\leq C\label{5.0-2}
\end{eqnarray}
for a certain $\alpha\in(0, 1)$ and for any $0<\delta<T$, where $C$ is a positive constant depending only on $\delta, k, n$ and $\|u_0\|_\infty+\|\nabla d_0\|_\infty$. By the aid of (\ref{5.0-1}) and (\ref{5.0-2}), using diagonal argument and applying Arzela-Ascoli theorem, there is a subsequence of $(u^\varepsilon, d^\varepsilon)$, still denoted by $(u^\varepsilon, d^\varepsilon)$, and $(u, d)$, such that
$$
(u^\varepsilon, d^\varepsilon, \nabla d^\varepsilon)\rightarrow(u, d, \nabla d)
$$
pointwisely, and
\begin{eqnarray}
& \|(u, \nabla d)\|_{L^\infty(0, T; L^\infty)}\leq C,\quad \|(u, d)\|_{L^\infty(\delta, T; W^{k,\infty}\times W^{k+1,\infty})}\leq C,\label{5.4}
\end{eqnarray}
for any $0<\delta<T$.

We claim that all the following weak star limits in $L^\infty$ hold true
\begin{eqnarray}
&(u^\varepsilon(t), d^\varepsilon(t))\rightarrow(u(t), d(t)), \qquad (e^{t\Delta}u_0^\varepsilon, e^{t\Delta}d_0^\varepsilon)\rightarrow(e^{t\Delta}u_0, e^{t\Delta}d_0),\label{7.1}\\
&\int_0^te^{(t-s)\Delta}(|\nabla d^\varepsilon|^2d^\varepsilon-(u^\varepsilon\cdot\nabla)d^\varepsilon)ds\rightarrow\int_0^te^{(t-s)\Delta}(|\nabla d|^2d-(u\cdot\nabla)d)ds,\label{7.2}\\
&\int_0^te^{(t-s)\Delta}\mathbb P\textmd{div}(u^\varepsilon\otimes u^\varepsilon+\nabla d^\varepsilon\odot\nabla d^\varepsilon)ds\rightarrow\int_0^te^{(t-s)\Delta}\mathbb P\textmd{div}(u\otimes u+\nabla d\odot\nabla d)ds.\label{7.3}
\end{eqnarray}
Take arbitrary $\varphi\in C_0^\infty(\mathbb R^n)$, by Lemma \ref{lem2.1}, there holds $\|e^{t\Delta}\varphi\|_1\leq C\|\varphi\|_1$ for any $t>0$. Recalling that
$$
(u^\varepsilon, d^\varepsilon, \nabla d^\varepsilon)\rightarrow(u, d, \nabla d)
$$
pointwisely, $(u_0^\varepsilon, d_0^\varepsilon)(x)\rightarrow(u_0, d_0)(x)$, a.e. $x\in\mathbb R^n$, and $\|(u^\varepsilon, d^\varepsilon)\|_{L^\infty(0, T; L^\infty\times W^{1,\infty})}\leq C$, it follows from Lebesgue's dominate convergence theorem that
\begin{eqnarray*}
&\int_{\mathbb R^n}(u^\varepsilon(x, t), d^\varepsilon(x, t))\varphi(x)dx\rightarrow\int_{\mathbb R^n}(u(x,t), d(x,t))\varphi(x)dx,\\
&\int_{\mathbb R^n}e^{t\Delta}u_0^\varepsilon\varphi dx=\int_{\mathbb R^n}u_0^\varepsilon e^{t\Delta}\varphi dx\rightarrow\int_{\mathbb R^n}u_0e^{t\Delta}\varphi dx=\int_{\mathbb R^n}e^{t\Delta}u_0\varphi dx,\\
&\int_{\mathbb R^n}e^{t\Delta}d_0^\varepsilon\varphi dx=\int_{\mathbb R^n}d_0^\varepsilon e^{t\Delta}\varphi dx\rightarrow\int_{\mathbb R^n}d_0e^{t\Delta}\varphi dx=\int_{\mathbb R^n}e^{t\Delta}d_0\varphi dx,
\end{eqnarray*}
and
\begin{align*}
&\int_{\mathbb R^n}\left(\int_0^t e^{(t-s)\Delta}(|\nabla d^\varepsilon|^2d^\varepsilon-(u^\varepsilon\cdot\nabla)d^\varepsilon)ds\right)\varphi(x)dx\\
=&\int_0^t\int_{\mathbb R^n}e^{(t-s)\Delta}\varphi(|\nabla d^\varepsilon|^2d^\varepsilon-(u^\varepsilon\cdot\nabla)d^\varepsilon)dxds\\
\rightarrow&\int_0^t\int_{\mathbb R^n}e^{(t-s)\Delta}\varphi(|\nabla d|^2d-(u\cdot\nabla)d)dxds\\
=&\int_0^t\int_{\mathbb R^n}e^{(t-s)\Delta}(|\nabla d|^2d-(u\cdot\nabla)d)\varphi dxds\\
=&\int_{\mathbb R^n}\left(\int_0^t e^{(t-s)\Delta}(|\nabla d|^2d-(u\cdot\nabla)d)ds\right)\varphi(x)dx,
\end{align*}
thus (\ref{7.1}) and (\ref{7.2}) hold true.

Now, we turn to the proof of (\ref{7.3}). Given $t>0$, then for any $0<t_0<t$, it follows
\begin{align*}
I^\varepsilon=&\left|\int_{\mathbb R^n}\left(\int_0^t e^{(t-s)\Delta}\mathbb P\textmd{div}(u^\varepsilon\otimes u^\varepsilon+\nabla d^\varepsilon\odot\nabla d^\varepsilon-u\otimes u-\nabla d\odot\nabla d)ds\right)\varphi(x)dx\right|\\
=&\left|\int_0^t\int_{\mathbb R^n}e^{(t-s)\Delta}\mathbb P\textmd{div}(u^\varepsilon\otimes u^\varepsilon+\nabla d^\varepsilon\odot\nabla d^\varepsilon-u\otimes u-\nabla d\odot\nabla d)\varphi(x)dxds\right|\\
\leq&\int_0^{t-t_0}\int_{\mathbb R^n}|e^{(t-s)\Delta}\mathbb P\nabla \varphi||u^\varepsilon\otimes u^\varepsilon+\nabla d^\varepsilon\odot\nabla d^\varepsilon-u\otimes u-\nabla d\odot\nabla d|dxds\\
&+\int_{t-t_0}^t\int_{\mathbb R^n}|e^{(t-s)\Delta}\mathbb P\nabla \varphi||u^\varepsilon\otimes u^\varepsilon+\nabla d^\varepsilon\odot\nabla d^\varepsilon-u\otimes u-\nabla d\odot\nabla d|dxds\\
=&I_1^\varepsilon(t_0)+I_2^\varepsilon(t_0).
\end{align*}
We will show that $I_2^\varepsilon(t_0)\rightarrow 0$ as $t_0\rightarrow0$, uniformly with respective to $\varepsilon$, and $I_1^\varepsilon(t_0)\rightarrow0$ as $\varepsilon\rightarrow0$ for each fixed $t_0\in(0, t)$. We first consider $I_2^\varepsilon(t_0)$. Recalling that $\|(u^\varepsilon, d^\varepsilon)\|_{L^\infty(0, T; L^\infty\times W^{1,\infty})}\leq C$ and $\|(u, d)\|_{L^\infty(0, T; L^\infty\times W^{1,\infty})}\leq C$, it follows from Lemma \ref{lem2.1} that
\begin{align}
I_2^\varepsilon(t_0)\leq&\int_{t-t_0}^t\int_{\mathbb R^n}|e^{(t-s)\Delta}\mathbb P\nabla\varphi|dxds\leq C\int_{t-t_0}^t(t-s)^{-1/2}\|\varphi\|_1ds\nonumber\\
\leq&Ct_0^{1/2}\|\varphi\|_1\rightarrow0,\qquad\mbox{as }t_0\rightarrow0,\mbox{ uniformly w.r.t. }\varepsilon.\label{7.5}
\end{align}
Next, we consider $I_1^\varepsilon(t_0)$ for fixed $t_0$. Define
$$
f_\varepsilon(s)=\int_{\mathbb R^n}|e^{(t-s)\Delta}\mathbb P\nabla \varphi||u^\varepsilon\otimes u^\varepsilon+\nabla d^\varepsilon\odot\nabla d^\varepsilon-u\otimes u-\nabla d\odot\nabla d|dx,\quad s\in[0, t-t_0].
$$
By Lemma \ref{lem2.1}, recalling that $u^\varepsilon, \nabla d^\varepsilon, u$ and $\nabla d$ are all bounded, we have
\begin{equation}\label{7.6}
|f_\varepsilon(s)|\leq C\|e^{(t-s)\Delta}\mathbb P\nabla \varphi\|_1\leq C(t-s)^{-1/2}\|\varphi\|_1\leq Ct_0^{-1/2}\|\varphi\|_1,\quad\forall s\in[0, t-t_0].
\end{equation}
Moreover, for each $s\in[0, t-t_0]$, since $e^{(t-s)\Delta}\mathbb P\nabla\varphi\in L^1(\mathbb R^n)$ and $(u^\varepsilon, \nabla d^\varepsilon)(x, t)\rightarrow(u, \nabla d)(x, t)$ a.e. $x\in\mathbb R^3$, it follows from Lebesgue's dominate convergence theorem that $$
f_\varepsilon(s)\rightarrow0,\qquad\forall s\in[0, t-t_0].
$$
This combined with (\ref{7.6}), it follows from Lebesgue's dominate convergence theorem again that
\begin{equation}\label{7.7}
I_1^\varepsilon(t_0)=\int_0^{t-t_0}f_\varepsilon(s)ds\rightarrow0,\qquad\mbox{ as }\varepsilon\rightarrow0^+.
\end{equation}
By the aid of (\ref{7.5}) and (\ref{7.7}), we can deduce $I^\varepsilon\rightarrow0$ as $\varepsilon\rightarrow0^+$ by first taking $t_0$ small and then letting $\varepsilon$ small to show that the quantity of $I^\varepsilon$ is arbitrary small as $\varepsilon\rightarrow0$, and thus (\ref{7.3}) holds true.

Since $(u^\varepsilon, d^\varepsilon)$ is a mild solution, it follows
\begin{eqnarray}
& u^\varepsilon(t)=e^{t\Delta}u_0^\varepsilon-\int_0^te^{(t-s)\Delta}\mathbb P\textmd{div}(u^\varepsilon\otimes u^\varepsilon+\nabla d^\varepsilon\odot\nabla d^\varepsilon)ds,\qquad 0\leq t\leq T,\label{5.1}\\
& d^\varepsilon(t)=e^{t\Delta}d_0^\varepsilon+\int_0^te^{(t-s)\Delta}(|\nabla d^\varepsilon|^2d^\varepsilon-(u^\varepsilon\cdot\nabla)d^\varepsilon)ds,\qquad0\leq t\leq T.\label{5.2}
\end{eqnarray}
By the aid of (\ref{7.1})--(\ref{7.3}), we can take the weak star limit in (\ref{5.1}) and (\ref{5.2}) to conclude that
\begin{eqnarray*}
  u(t) &=& e^{t\Delta}u_0-\int_0^te^{(t-s)\Delta}\mathbb P\textmd{div}(u\otimes u+\nabla d\odot\nabla d)ds,\qquad0<t\leq T, \\
  d(t) &=& e^{t\Delta}d_0+\int_0^t e^{(t-s)\Delta}(|\nabla d|^2d-(u\cdot\nabla)d)ds,\qquad 0<t\leq T.
\end{eqnarray*}
These two identity automatically hold true at $t=0$. Thus $(u, d)$ is a mild solution to system (\ref{1.1})--(\ref{1.4}) on $[0, T]$.

Next, we prove the uniqueness of bounded local mild solutions. Let $(u, d)$ and $(v, n)$ be mild solutions to system (\ref{1.1})--(\ref{1.4}) satisfying
$$
\|(u, d)\|_{L^\infty(0, T; L^\infty\times W^{1,\infty})}\leq M \qquad \|(v, n)\|_{L^\infty(0, T; L^\infty\times W^{1,\infty})}\leq M
$$
for some positive constant $M$.
Then we have
\begin{align*}
u(t)-v(t)=&\int_0^t e^{(t-s)\Delta}\mathbb P\textmd{div}(v\otimes v-u\otimes u+\nabla n\odot\nabla n-\nabla d\odot\nabla d)ds\\
=&\int_0^t e^{(t-s)\Delta}\mathbb P\textmd{div}((v-u)\otimes v+u\otimes(v-u)\\
&+\nabla(n-d)\odot\nabla n+\nabla d\odot\nabla(n-d))ds,
\end{align*}
and
\begin{align*}
d(t)-n(t)=&\int_0^t e^{(t-s)\Delta}(|\nabla d|^2d-|\nabla n|^2n+(v\cdot\nabla)n-(u\cdot\nabla)d)ds\\
=&\int_0^te^{(t-s)\Delta}(\nabla(d-n)\nabla(d+n)d+|\nabla n|^2(d-n)\\
&+(v-u)\nabla n+u\nabla(n-d))ds,
\end{align*}
and thus it follows from Lemma \ref{lem2.1} that
\begin{eqnarray*}
&\|u-v\|_\infty(t)\leq C\int_0^t(t-s)^{-1/2}(\|u-v\|_\infty+\|\nabla n-\nabla d\|_\infty)ds,\\
&\|d-n\|_\infty(t)\leq C\int_0^t(\|\nabla d-\nabla n\|_\infty+\|d-n\|_\infty+\|u-v\|_\infty)ds,\\
&\|\nabla d-\nabla n\|_\infty(t)\leq C\int_0^t(t-s)^{-1/2}(\|\nabla d-\nabla n\|_\infty+\|d-n\|_\infty+\|u-v\|_\infty)ds.
\end{eqnarray*}
Combining these inequalities, we obtain
$$
(\|u-v\|_\infty+\|d-n\|_{W^{1,\infty}})(t)\leq C\int_0^t(t-s)^{-1/2}(\|u-v\|_\infty+\|d-n\|_{W^{1,\infty}})ds.
$$
Setting $\phi(t)=\sup_{0\leq s\leq t}(\|u-v\|_\infty+\|d-n\|_{W^{1,\infty}})(s)$, it follows
$$
\phi(t)\leq C\int_0^t(1+(t-s)^{-1/2})ds\phi(t)=C(t^{1/2}+t)\phi(t),
$$
from which we obtain
$$
\phi(t)=0,\qquad 0\leq t\leq\sigma,
$$
where $\sigma$ is a positive constant depending only on $n$ and $M$, and consequently
$$
(u, d)=(v, n),\qquad t\in [0, \sigma].
$$
Similarly, we can prove that $(u, d)=(v, n)$ on $[\sigma, 2\sigma]$, and finally, we obtain $(u, d)=(v, n)$ on $[0, T]$. This completes the proof of the uniqueness.

Then, we prove the regularity (i). Since $(u, d)$ is a mild solution, there hold
\begin{eqnarray*}
  u(t) &=& e^{(t-t_0)\Delta}u(t_0)-\int_{t_0}^te^{(t-s)\Delta}\mathbb P\textmd{div}(u\otimes u+\nabla d\odot\nabla d)ds,\qquad0<t\leq T, \\
  d(t) &=& e^{(t-t_0)\Delta}d(t_0)+\int_{t_0}^t e^{(t-s)\Delta}(|\nabla d|^2d-(u\cdot\nabla)d)ds,\qquad 0<t\leq T,
\end{eqnarray*}
and thus, for any $\delta\leq t_0\leq t\leq T$, recalling (\ref{5.4}), we have
\begin{align*}
&\|\nabla^k(u(t)-u(t_0))\|_\infty\\
\leq&\|(e^{(t-t_0)\Delta}-1)\nabla^ku(t_0)\|_\infty+\int_0^t\|e^{(t-s)\Delta}\mathbb P\textmd{div}\nabla^k(u\otimes u+\nabla d\odot\nabla d)\|_\infty ds\\
\leq&C\int_0^{t-t_0}\|e^{s\Delta}\Delta\nabla^ku(t_0)\|_\infty ds+C\int_0^t(t-s)^{-1/2}\|\nabla^k(u\otimes u+\nabla d\odot\nabla d)\|_\infty ds\\
\leq&C\|\Delta\nabla^k u(t_0)\|_\infty|t-t_0|+C\int_{t_0}^t(t-s)^{-1/2}\|(u,\nabla d)\|_{W^{k,\infty}}ds\\
\leq&C(|t-t_0|+|t-t_0|^{1/2})\leq C|t-t_0|^{1/2},
\end{align*}
and
\begin{align*}
&\|\nabla^{k+1}(d(t)-d(t_0))\|_\infty\\
\leq&\left\|(e^{(t-t_0)\Delta}-1)\nabla^{k+1}d(t_0)\right\|_\infty+\int_{t_0}^t\|e^{(t-s)\Delta}\nabla^{k+1}(|\nabla d|^2d-(u\cdot\nabla)d)\|_\infty ds\\
\leq&C\int_0^{t-t_0}\|e^{s\Delta}\Delta\nabla^{k+1}d(t_0)\|_\infty ds+C\int_{t_0}^t\|(u, d)\|_{W^{k+1,\infty}\times W^{k+2,\infty}}ds\\
\leq&C|t-t_0|.
\end{align*}
In the above, we have used the fact that
$$
\|(u, d)\|_{L^\infty(\delta, T; W^{k+2,\infty}\times W^{k+3,\infty})}\leq C,
$$
which is guaranteed by (\ref{5.4}), since it holds true for any $k$. Thus, we have proven that
$$
u\in C^{1/2}([\delta, T]; W^{k,\infty})\quad\mbox{ and }\quad d\in Lip([\delta, T]; W^{k+1,\infty}).
$$

Now, we prove (ii), the continuity of the functions $\|u(t)\|_\infty$ and $\|\nabla d(t)\|_\infty$. Thanks to (i), one can easily see that $\|u(t)\|_\infty$ and $\|\nabla d(t)\|_\infty$ are continuous in $(0, T]$. While the continuity at $t=0$, i.e.
$$
\lim_{t\rightarrow0^+}\|u(t)\|_\infty=\|u_0\|_\infty\qquad\mbox{and}\qquad\lim_{t\rightarrow0^+}\|\nabla d(t)\|_\infty=\|\nabla d_0\|_\infty
$$
can be proven in the same way as in Proposition \ref{prop2.1}, thus we omit it here.

For (iii), the lower bound of the existence time $T$, follows from (\ref{5.0}). Thus we complete the proof of Theorem \ref{thm1.1}.

\section{Vorticity direction blow up criterion}\label{sec6}

In this section, we prove the vorticity direction blow up criterion for type I mild solution to system (\ref{1.1})--(\ref{1.3}), in other words, we prove Theorem \ref{thm1.2} and Theorem \ref{thm1.3}.

\textbf{Proof of Theorem \ref{thm1.2}.} By Theorem \ref{thm1.1}, it suffices to show that
\begin{equation}\label{6.1}
\limsup_{t\rightarrow 0^+}(\|u\|_\infty+\|\nabla d\|_\infty)(t)<\infty.
\end{equation}

We divide the proof of (\ref{6.1}) into four steps: in step 1, we use the blow up argument to derive a bounded backward mild solution $(\overline u, \overline d)$ to the system (\ref{1.1})--(\ref{1.3}); in step 2, using the continuity assumption on the direction filed, we prove that $\overline d$ is a constant vector field; in step 3, using the type I assumption, we prove that the vorticity field $\overline\omega\not\equiv0$; in the last step, step 4, we prove that $\overline\omega\equiv0$ by using the continuity assumption on the vorticity direction.

\textbf{Step 1. Blow up argument. }
Suppose that (\ref{6.1}) does not hold true, then
$$
\limsup_{t\rightarrow0^+}(\|u\|_\infty+\|\nabla d\|_\infty)(t)=\infty.
$$
Then, we can take $(x_k, t_k)\in\mathbb R^3\times(-1, 0)$ with $t_k\nearrow0$, such that
$$
M_k\overset{\textmd{def}}{=}\sup_{-1\leq t\leq t_k}(\|u\|_\infty+\|\nabla d\|_\infty)(t)\nearrow\infty,\qquad |u(x_k,t_k)|+|\nabla d(x_k, t_k)|\geq M_k-1.
$$
Define $(u_k, d_k)$ as follows
$$
u_k(x,t)=\frac{1}{M_k}u\left(x_k+\frac{x}{M_k}, t_k+\frac{t}{M_k^2}\right),\quad d_k=d\left(x_k+\frac{x}{M_k}, t_k+\frac{t}{M_k^2}\right)
$$
for any $(x, t)\in Q_k\overset{\textmd{def}}{=}\mathbb R^3\times(-(1+t_k)M_k^2, -t_kM_k^2)$. Since $(u, d)$ is a mild solution in $\mathbb R^3\times(-1, 0)$, it's easy to check that $(u_k, d_k)$ is a mild solution in $Q_k$, and
\begin{equation}\label{6.2}
1-\frac{1}{M_k}\leq|u_k(0, 0)|+|\nabla d_k(0, 0)|\leq 1.
\end{equation}
Noticing that $\|(u_k, \nabla d_k)\|_{L^\infty(Q_k)}\leq 1$, it follows from Proposition \ref{prop3.1} that for large $k$
$$
\|(u_k, d_k)\|_{L^\infty\left(\left[-M_k^2/2, 0\right]; W^{3,\infty}\times W^{4,\infty}\right)}\leq C,
$$
$$
\|u_k(t)-u_k(t_0)\|_{W^{2,\infty}}\leq C(|t-t_0|+|t-t_0|^{1/6}),
$$
and
$$
\|d_k(t)-d_k(t_0)\|_{W^{3,\infty}}\leq C(|t-t_0|+|t-t_0|^{1/4})
$$
for any $t_0, t\in[-\frac{M_2^2}{2}, 0]$,
where $C$ is a constant depending only on $n$. By Arzela-Ascoli theorem, there is a subsequence, still denoted by $(u_k, d_k)$, and $(\bar u, \bar d)$, such that
\begin{eqnarray}
&(\bar u, \bar d)\in C_{loc}^\alpha((-\infty, 0]; W^{2,\infty}\times W^{3,\infty}),\qquad \|(\bar u, \bar d)\|_{L^\infty(-\infty, 0; W^{2,\infty}\times W^{3,\infty})}\leq C,\nonumber\\
&|\bar u(0, 0)|+|\nabla \bar d(0, 0)|=1,\nonumber\\
&(u_k, d_k)\rightarrow(\bar u, \bar d)\quad\mbox{ in }C^{\alpha/2}_{loc}((-\infty, 0]; W^{2,\infty}(K)\times W^{3,\infty}(K))\label{6.3}
\end{eqnarray}
for a certain $\alpha\in(0, 1)$ and for any compact subset $K\subseteq\mathbb R^3$. Since $(u_k, d_k)$ is a mild solution, we have
\begin{eqnarray}
&u_k(t)=e^{(t-t_0)\Delta}u_k(t_0)-\int_{t_0}^te^{(t-s)\Delta}\mathbb P\textmd{div}(u_k\otimes u_k+\nabla d_k\odot\nabla d_k)ds,\label{6.4}\\
&d_k(t)=e^{(t-t_0)\Delta}d_k(t_0)+\int_{t_0}^te^{(t-s)\Delta}(|\nabla d_k|^2d_k-(u_k\cdot\nabla)d_k)ds.\label{6.5}
\end{eqnarray}
By the aid of (\ref{6.3}), one can prove (in the same way as what we done in the proof of (\ref{7.1})--(\ref{7.3})) that all the terms in (\ref{6.4}) and (\ref{6.5}) weak star converge to the corresponding terms of $(\bar u, \bar d)$, respectively, and thus $(\bar u, \bar d)$ is a mild solution to system (\ref{1.1})--(\ref{1.3}) in $\mathbb R^3\times(-\infty, 0]$.

\textbf{Step 2. $\bar d(\cdot, t)\equiv C$ in $\mathbb R^3\times(-\infty, 0]$.}
Take arbitrary $x, y\in \mathbb R^3$. It follows that
\begin{align*}
|d_k(x, t)-d_k(y, t)|=&\left|d\left(x_k+\frac{x}{M_k}, t_k+\frac{t}{M_k^2}\right)-d\left(x_k+\frac{y}{M_k}, t_k+\frac{t}{M_k^2}\right)\right|\\
\leq&\eta\left(\frac{|x-y|}{M_k}\right),
\end{align*}
which, by taking $k\rightarrow\infty$, gives $\bar d(x, t)-\bar d(y, t)=0$, and thus $\bar d(\cdot, t)\equiv C(t)$. Since $(\bar u, \bar d)$ is a mild solution, we have
$$
\bar d(t)=e^{(t-t_0)\Delta }\bar d(t_0)+\int_{t_0}^te^{(t-s)\Delta}(|\nabla\bar d|^2\bar d-(\bar u\cdot\nabla)\bar d)ds=e^{(t-t_0)\Delta} C(t_0)=C(t_0),
$$
and thus $\bar d\equiv C$ in $\mathbb R^3\times(-\infty, 0]$.

\textbf{Step 3. $\bar\omega\not\equiv0$ in $\mathbb R^3\times(-\infty, 0]$, where $\bar\omega=\textmd{curl} \bar u$.}
Suppose that $\bar\omega\equiv0$, then it follows
$$
\Delta\bar u=\nabla\textmd{div}\bar u-\textmd{curl curl}\bar u=0.
$$
The Liouville theorem for bounded harmonic functions in whole space yields $\bar u=C(t)$. Recalling that $\bar d\equiv C$ and $(\bar u, \bar d)$ is a mild solution to system (\ref{1.1})--(\ref{1.3}), we deduce
\begin{align*}
\bar u(t)=&e^{(t-s)\Delta}\bar u(t_0)-\int_{t_0}^te^{(t-s)\Delta}\mathbb P\textmd{div}(\bar u\otimes\bar u+\nabla\bar d\odot\nabla\bar d)ds\\
=&e^{(t-s)\Delta}\bar C(t_0)-\int_{t_0}^te^{(t-s)\Delta}\mathbb P(\bar u\cdot\nabla\bar u)ds\\
=&e^{(t-t_0)\Delta}C(t_0)=C(t_0),
\end{align*}
and thus $\bar u\equiv C$ in $\mathbb R^3\times(-\infty, 0]$. Recalling that $\bar d\equiv C$ in $\mathbb R^3\times(-\infty, 0]$ and $|\bar u(0, 0)|+|\nabla \bar d|(0, 0)=1$, we conclude that $|\bar u|\equiv1$ in $\mathbb R^3\times(-\infty, 0]$. Since $(u, d)$ is type I mild solution, there holds
\begin{align*}
\|u_k(t)\|_\infty=&\left\|\frac{1}{M_k}u\left(x_k+\frac{x}{M_k}, t_k+\frac{t}{M_k^2}\right)\right\|_\infty\leq CM_k^{-1}\left(-\left(t_k+\frac{t}{M_k^2}\right)\right)^{-1/2}\\
=&CM_k^{-1}\left(|t_k|+\frac{|t|}{M_k^2}\right)^{-1/2}\leq CM_k^{-1}\left(\frac{|t|}{M_k^2}\right)^{-1/2}=C|t|^{-1/2}.
\end{align*}
Taking $k\rightarrow\infty$ yields
$$
1=|\bar u|\leq C|t|^{-1/2},\qquad\forall t\in (-\infty, 0),
$$
which is a contradiction.

\textbf{Step 4. $\bar\omega\equiv0$ in $\mathbb R^3\times(-\infty, 0]$.} We divide Step 4 into three steps as follows.

\textbf{Step 4.1. $\overline\zeta\equiv\overline{\zeta_0}(t)$. }Set $\Omega(t)=\left\{x\in \mathbb R^3|\bar\omega(x, t)\not=0\right\}$. Take arbitrary compact subset $K$ of $\Omega(t)$, then there is $\delta>0$ such that $|\bar\omega(x,t)|\geq\delta$ for all $x\in K$. Since $\omega_k\rightarrow\bar\omega$ locally uniformly, for large $k$ we have $|\omega_k(x, t)|\geq\frac{\delta}{2}$ for all $x\in K$, and thus
$$
\frac{\delta}{2}\leq|\omega_k(x, t)|=\frac{1}{M_k^2}\left|\omega\left(x_k+\frac{x}{M_k}, t_k+\frac{t}{M_k^2}\right)\right|,
$$
which implies $x_k+\frac{x}{M_k}\in\Omega_\sigma\left(t_k+\frac{t}{M_k^2}\right)$ for all $x\in K$. Consequently, we have
\begin{align*}
|\zeta_k(x, t)-\zeta_k(y, t)|=&\left|\zeta\left(x_k+\frac{x}{M_k}, t_k+\frac{t}{M_k^2}\right)-\zeta\left(x_k+\frac{y}{M_k}, t_k+\frac{t}{M_k^2}\right)\right|\\
\leq&\eta\left(\frac{|x-y|}{M_k}\right),
\end{align*}
where $\zeta_k=\frac{\omega_k}{|\omega_k|}$ and $\omega_k=\textmd{curl}u_k$. Taking $k\rightarrow\infty$ in the above inequality gives
$$
|\bar\zeta(x, t)-\bar\zeta(y, t)|=0,\qquad\forall x,y\in K,
$$
and thus $\bar\zeta(\cdot, t)\equiv\bar\zeta_0(K,t)$ on $K$. Since $K$ is a arbitrary compact subset of $\Omega(t)$, we conclude that $\bar\zeta(\cdot, t)\equiv\bar\zeta_0(t)$ on $\Omega(t)$. Hence
$$
\bar\omega(x,t)=|\bar\omega(x, t)|\bar\zeta_0(t).
$$

\textbf{Step 4.2 $\zeta_0(t)\equiv C$.} For this aim, take arbitrary $t_0\in(-\infty, 0)$. Since system (\ref{1.1})--(\ref{1.3}) is rotational invariant, we may assume that $\bar\omega(x, t_0)=(0, 0, \bar\omega_3(x, t_0)$ and $\bar\zeta_0(x, t_0)=(0, 0, 1)$ by rotation. At time $t_0$, since $(\textmd{curl}\bar\omega)_3=0$, it follows that
$$
\Delta\bar u_3=\partial_3\textmd{div}\bar u-(\textmd{curl curl}\bar u)_3=-(\textmd{curl}\bar\omega)_3=0,
$$
where $(\textmd{curl}\bar\omega)_3$ denotes the third exponent of the vector $\textmd{curl}\bar\omega$. By Liouville theorem of bounded harmonic functions in the whole space, we conclude that $\bar u_3$ is spatially constant at time $t_0$, i.e., $\bar u_3(\cdot, t_0)\equiv C_0$. Recall that $\bar\omega(x, t_0)=\bar\omega_2(x, t_0)=0$, it has
$$
\frac{\partial\bar u_3}{\partial x_2}-\frac{\bar u_2}{\partial x_3}=\frac{\partial\bar u_1}{\partial x_3}-\frac{\bar u_3}{\partial x_1}=0
$$
at time $t_0$, and thus $\bar u_1$ and $\bar u_2$ are independent of $x_3$ at time $t_0$. Recalling that $\bar d\equiv C$ in $\mathbb R^3\times(-\infty, 0]$, we observe that $\bar u$ satisfies the Navier-Stokes equations
$$
\partial_t\bar u+(\bar u\cdot\nabla)\bar u-\Delta\bar u+\nabla\bar p=0,\quad\textmd{div}\bar u=0\qquad\mbox{in }\mathbb R^3\times(-\infty, 0].
$$
Since $\bar u$ is $x_3$ independent at $t_0$, the local existence and uniqueness theorem of bounded mild solutions for Navier-Stokes equations \cite{GIGA99} (or see Theorem \ref{thm1.1} of the present paper) implies that the solution stays two-dimensional (independent of $x_3$) for $t\in[t_0, t_0+\varepsilon]$ and thus $\bar\zeta_0(x, t)=(0, 0, 1)$ for $t\in[t_0, t_0+\varepsilon]$. By taking $t_0$ over all values in $(-\infty, 0)$, we conclude that $\bar u$ is independent of $x_3$ for all $t$, and thus $\bar\zeta_0$ is independent of $t$, i.e. $\bar\zeta_0(x, t)\equiv(0, 0, 1)$.

\textbf{Step 4.3. $\overline\omega\equiv0$. }Starting from the observation that $\bar\zeta_0(, t)=(0, 0, 1)$, one can prove that
$$
\bar u(x, t)=(\bar u_1(x_1, x_2, t), \bar u_2(x_1, x_2, t), C_0)
$$
for each $t\in(-\infty, 0]$ as what we do for $t=t_0$ in the previous paragraph. Thus $\bar\omega_3(x_1, x_2, t)$ solves the two dimensional vorticity equation
$$
\partial_t\bar\omega_3-\Delta\bar\omega_3+(\bar u\cdot\nabla)\bar\omega_3=0\qquad\mbox{ in }\mathbb R^2\times(-\infty, 0).
$$
Applying the Liouville type theorem (\cite{GIGA99} Lemma 2.3) to the above equation yields $\bar\omega_3\equiv0$, and thus $\bar\omega\equiv0$. This completes the proof of Step 4.

Obviously, the conclusion in Step 3 contradicts to that in Step 4. This contradiction provides us that (\ref{6.1}) holds true. This completes the proof of Theorem \ref{thm1.2}.~~~~~~~~~~~~~~~~~~~~~~~~~~~~~~~~~~$\square$

\textbf{Proof of Theorem \ref{thm1.3}.} Check the proof of Theorem \ref{thm1.2}, using the same notations as above, it suffices to show that $\overline\zeta(x, t)\equiv\overline{\zeta_0}(t)$, i.e. Step 4.1 in the above proof, because only in this step is the continuity assumption on the vorticity direction used.

Suppose that $\int_{-1}^0\|\nabla\zeta\|_{L^\beta(\Omega_\sigma(t))}^\alpha dt<\infty$ for some give $\sigma>0$ with $\frac{2}{\alpha}+\frac{3}{\beta}=1$ and $2\leq \alpha<\infty$. We calculate
\begin{align*}
\int_{-1}^0\|\nabla\zeta\|_{L^\beta(\Omega_\sigma(t))}^\alpha dt=&\int_{-1}^0\left(\int_{\Omega_\sigma(t)}|\nabla\zeta(x,t)|^\beta dx\right)^{\alpha/\beta}dt\\
=&\int_{-1}^0\left(\int_{\mathbb R^3}|\nabla\zeta(x,t)\chi_{\Omega_\sigma(t)}(x,t)|^\beta dx\right)^{\alpha/\beta}dt,
\end{align*}
where $\chi_E$ is the characteristic function of the set $E$.
For any $\tau>0$, we define
$$
f_\tau(x, t)=|\nabla\zeta(x, t)\chi_{\Omega_\tau(t)}(x,t)|^\beta,
$$
then for any fixed $t$, noticing that the set $\Omega_\tau(t)$ decreases to the empty set as $\tau$ goes to infinity, one can easily infer that $f_\tau(\cdot,t)$ decreases to $0$ as $\tau$ goes to infinity. Obviously there holds that
$$
0\leq f_{\tau}(x, t)\leq f_{\sigma}(x,t),\quad \forall\tau\geq\sigma.
$$
Dominate convergence theorem provides us that
\begin{align*}
\int_{-1}^0\|\nabla\zeta\|_{L^\beta(\Omega_\tau(t))}^\alpha dt=\int_{-1}^0\left(\int_{\mathbb R^3}|\nabla\zeta(x,t)\chi_{\Omega_\tau(t)}(x,t)|^\beta dx\right)^{\alpha/\beta}dt\rightarrow0,\mbox{ as }\tau\rightarrow\infty.
\end{align*}
On account of this fact, for any given $\varepsilon>0$, there is $\sigma_\varepsilon>0$, such that
$$
\int_{-1}^0\|\nabla\zeta\|_{L^\beta(\Omega_{\sigma_\varepsilon}(t))}^\alpha dt\leq\varepsilon.
$$
Set
$$
\Omega=\{(x, t)|\bar\omega(x, t)\not=0\},
$$
and take arbitrary compact set $K$ contained in $\Omega$. Recalling that $\bar\omega$ is continuous, there is a positive number $\delta$ such that
$$
|\bar\omega(x, t)|\geq\delta,\quad\forall(x, t)\in K.
$$
Recalling (\ref{6.3}), it's clear that $\omega_k\rightarrow\bar\omega$ uniformly on $K$, and thus
$$
|\omega_k(x,t)|\geq\frac{\delta}{2},\quad\forall(x,t)\in K,\mbox{ for large }k.
$$
Recalling that
$$
u_k(x,t)=\frac{1}{M_k}u(x_k+\frac{x}{M_k},t_k+\frac{t}{M_k^2}),
$$
it follows
$$
|\omega(x_k+\frac{x}{M_k},t_k+\frac{t}{M_k^2})|=|M_k^2\omega_k(x,t)|\geq \frac{\delta}{2}M_k^2\geq\sigma_\varepsilon,\quad\forall(x,t)\in K,\mbox{ for large }k,
$$
or equivalently
$$
x_k+\frac{K_t}{M_k}\subseteq\Omega_{\sigma_\varepsilon}(t_k+\frac{t}{M_k^2}),\quad\mbox{ for large }k,
$$
where $K_t=\{x\in\mathbb R^3|(x,t)\in K\}$. Note that
$$
\zeta_k(x,t)=\zeta(x_k+\frac{x}{M_k},t_k+\frac{t}{M_k^2}),\quad\nabla\zeta_k(x,t)=\nabla\zeta(x_k+\frac{x}{M_k},t_k+\frac{t}{M_k^2}).
$$
We calculate
\begin{align*}
&\int_{-(1+t_k)M_k^2}^{-t_kM_k^2}\|\nabla\zeta_k\|_{L^\beta(K_t)}^\alpha dt\\
=&\int_{-(1+t_k)M_k^2}^{-t_kM_k^2}\left(\int_{K_t}\left|\frac{1}{M_k}\nabla\zeta\left(x_k+\frac{x}{M_k},t_k+\frac{t}{M_k^2}\right)\right|^\beta dx\right)^{\alpha/\beta}dt\\
=&\int_{-(1+t_k)M_k^2}^{-t_kM_k^2}\left(\int_{x_k+\frac{K_t}{M_k}}\left|\frac{1}{M_k}\nabla\zeta\left(y, t_k+\frac{t}{M_k^2}\right)\right|^\beta M_k^3 dy\right)^{\alpha/\beta}dt\\
\leq&\int_{-(1+t_k)M_k^2}^{-t_kM_k^2}\left(\int_{\Omega_{\sigma_\varepsilon}\left(t_k+\frac{t}{M_k^2}\right)}\left|\frac{1}{M_k}\nabla\zeta\left(y, t_k+\frac{t}{M_k^2}\right)\right|^\beta M_k^3dy\right)^{\alpha/\beta}dt\\
=&\int_{-1}^{0}\left(\int_{\Omega_{\sigma_\varepsilon}\left(s\right)}M_k^{3-\beta}\left|\nabla\zeta\left(y, s\right)\right|^\beta dy\right)^{\alpha/\beta}M_k^2ds\\
=&\int_{-1}^{0}M_k^{2+\frac{\alpha}{\beta}(3-\beta)}\left(\int_{\Omega_{\sigma_\varepsilon}(s)}|\nabla\zeta(y, s)|^\beta dy\right)^{\alpha/\beta}M_k^2ds\\
=&M_k^{\alpha\left(\frac{2}{\alpha}+\frac{3}{\beta}-1\right)}\int_{-1}^0\|\nabla\zeta\|_{L^\beta(\Omega_{\sigma_\varepsilon}(s))}^\alpha ds\leq\varepsilon,\quad\mbox{ for large }k.
\end{align*}
Recalling that $\omega_k\geq\frac{\delta}{2}$ on $K$, it follows from (\ref{6.3}) that $\nabla\zeta_k\rightarrow\nabla\overline\zeta$ uniformly on $K$, and consequently, one can apply dominate convergence theorem to conclude
$$
\int_{-L}^0\|\nabla\zeta_k\|_{L^\beta(K_t)}^\alpha dx\rightarrow\int_{-L}^0\|\nabla\overline\zeta\|_{L^\beta(K_t)}^\alpha dt
$$
for any positive $L$, which, combined with the previous inequality, implies
$$
\int_{-L}^0\|\nabla\overline\zeta\|_{L^\beta(K_t)}^\alpha dt\leq\varepsilon,\quad\forall\varepsilon>0, L>0.
$$
Hence, $\nabla\overline\zeta\equiv0$ on $K$. Since $K$ is an arbitrary compact set contained in $\Omega$, one can easily infer that $\nabla\overline\zeta\equiv0$ on $\Omega$, which forces $\overline\zeta(x,t)\equiv\overline{\zeta_0}(t)$. The proof is complete.~~~~~~~~~~~~~~~~~~~~~~~~~~~~~~~~~~~~~~ $\square$

\par

\end{document}